\newtheorem{theorem}{Theorem}[section]
\newtheorem{df}[theorem]{Definition}
\newtheorem{rem}[theorem]{Remark}
\newtheorem{prop}[theorem]{Proposition}
\newtheorem{lem}[theorem]{Lemma}
\newtheorem{example}[theorem]{Example}%
\newtheorem{corollary}[theorem]{Corollary}
\newtheorem{observation}[theorem]{Observation}
\newenvironment{proof}{\noindent
                       {\bf Proof:}}{
                       \begin{flushright}$\Box$\end{flushright}}
\title{On discriminants, Tjurina modifications and the geometry of 
determinantal singularities}
\author{Anne Fr\"uhbis--Kr\"uger\footnote{partially supported by DFG priority program SPP 1489 'Algorithmic and Experimental Methods in Algebra, Geometry and Number Theory' and by NTH-grant 'Experimental Methods in Computer Algebra'}\\Institut f. Alg. Geometrie,Leibniz Universt\"at Hannover, Germany}
\begin{document}

\maketitle

\begin{abstract}
We describe a method for computing discriminants for a large 
class of families of isolated determinantal singularities -- more precisely, 
for subfamilies of ${\mathcal G}$-versal families. The approach intrinsically 
provides a decomposition of the discriminant into two parts and allows the 
computation of the determinantal and the non-determinantal loci of the family 
without extra effort; only the latter manifests itself in the Tjurina 
transform. This knowledge is then applied to the case of Cohen-Macaulay 
codimension 2 singularities putting several known, but previously unexplained 
observations into context and explicitly constructing a counterexample to
Wahl's conjecture on the relation of Milnor and Tjurina numbers for surface 
singularities.
\end{abstract}

\section{Introduction}
Isolated hypersurface and complete intersection singularities are well studied 
objects and there are many classical results about different ascpects such as 
topology, deformation behaviour, invariants, classification and even metric 
properties (see any textbook on singularities, e.g. \cite{Loo}, \cite{GLS}, 
\cite{Ish}). Beyond complete intersections, however, knowledge is rather 
scarce and unexpected phenomena arise. In this artcle, we focus on the 
class of determinantal singularities to pass beyond ICIS, as the properties 
already differ significantly, but classical results on determinantal
varieties and free resolutions provide strong tools to treat this case. 
Recently, significant progress has been made for this class, e.g. in  
\cite{PeR}, \cite{BCR}, \cite{NOT}, \cite{DP}, \cite{GR}. In \cite{FZ}
the use of Tjurina modifications made it possible to relate a given 
determinantal singularity to an often singular variety, which happens to be
an ICIS under rather mild conditions. This could e.g. be exploited in \cite{FZ}
and \cite{Zac} to determine the topology of the Milnor fibre of an isolated
Cohen-Macaulay codimension $2$ singularity (ICMC2 singularity for short).
But it is also obvious from those results that the Tjurina transform is blind
to certain other properties of an ICMC2 singularity. In this article, we 
explain which properties of the singularity manifest themselves in the 
Tjurina transform and which do not by studying the discriminant and a natural
decomposition thereof. \\
The general approach to determining the discriminant of a given family of 
varieties $V(I_{\underline{t}})$ is based on the Jacobian criterion. It 
involves the elimination of the original variables from the ideal generated 
by $I_{\underline{t}}$ and the ideal of minors of appropriate size of the 
relative Jacobian matrix of $I_{\underline{t}}$. However, the complexity 
of this approach, which originates from the sensitivity of Gr\"obner basis 
computations to the number of occurring variables, makes it impractical for
many examples. It is hence important to understand the structure of the
discriminant theoretically and to be able to decompose it appropriately 
by a priori arguments.
Making use of Hironaka's smoothness criterion \cite{Hir}, the structure 
of the perturbed matrix can be used to split the problem into two
smaller problems, one dealing with the locus of determinantal singularities,
the other one closely related to the Tjurina transform as it describes the
locus above which there are singularities adjacent to an $A_1$ singularity. \\

In section \ref{basic}, we first recall known facts about determinantal
singularities and then proceed to revisit Hironaka's smoothness criterion. 
In the following section \ref{discr}, we consider the discriminant of versal families of 
determinantal singularities of type $(m,n,t)$ starting with the simplest case
$(2,k,2)$, then passing on to maximal minors of matrices of arbitrary size 
and finally to smaller minors. As a sideeffect, we also obtain a quite explicit
formulation of the Tjurina transform in the non-maximal case.  The 
above mentioned decomposition of the discriminant into the two parts also
gives rise to the surprising behaviour of some aspects of ICMC2 
singularities, as we are seeing an interplay of influences related to 
properties of the generic determinantal singularity and to the Tjurina 
transform. With these two contributions in mind, it is possible to predict some
properties of the singular locus of the Tjurina transform, to explain
observations of \cite{DP} about ICMC2 $3$-folds and prove the easy direction 
of Wahl's conjecture \cite{Wah} on the relation between Milnor and Tjurina 
number for ICMC2 surface singuarities. The knowledge from this proof then
leads to the construction of a class of counter examples for the converse 
direction of the conjecture. These applications to the ICMC2 case are 
discussed in the final section.\\

The author wishes to thank Matthias Zach, Maria Ruas, Terry Gaffney, 
James Damon, Juan Nu\~no-Ballesteros and David Mond for many fruitful 
discussions on determinantal singularities. Moreover, she wishes to express 
her gratitude to the organizers of the International Workshop on Real and 
Complex Singularities in S\~{a}o Carlos in 2016, who brought together all of 
these colleagues in the same place at the same time and created a 
mathematically stimulating atmosphere, in which some of the questions adressed
in this article came up. The author is also endepted to Gert-Martin Greuel for
his remarks leading to significant improvements of the exposition. All examples
listed in this article were computed in {\sc Singular} \cite{Sing}. \\

\section{Basic Facts on EIDS} \label{basic}

Before focussing on the discriminant, we shall first recall the definition 
of properties of the class of singularities on which we focus in the following 
sections: essentially isolated determinantal singularities (EIDS) as  first 
introduced by Ebeling and Gusein-Zade in \cite{EGZ}. Although certain classes 
of such singularities had been studied before (e.g. \cite{FK1}, \cite{FN} and
\cite{BT}), simultaneous studies of certain properties of EIDS of all 
matrix sizes and types only appear recently e.g. in 
\cite{BCR} and \cite{NOT}.
In this section, we cover well-known facts about EIDS to give the reader the 
background knowledge for the subsequent considerations on the discriminant 
and the Tjurina transoform.

\begin{df}
Let $M_{m,n}$ denote the set of all $m \times n$-matrices with entries in 
${\mathbb C}$ and let $1 \leq t \leq {\rm min}\{m,n\}$. Then
$$M_{m,n}^t : = \{A \in M_{m,n} \mid rk(A) < t\}$$
is the generic determinantal variety. 
\end{df}

\begin{rem} \label{genMatrix}
$M_{m,n}^t$ can be understood as the variety in 
$M_{m,n} \cong {\mathbb C}^{mn}$ which is the vanishing locus of the 
ideal of $t$-minors of the matrix
$$\begin{pmatrix} x_1 & \dots & x_n \cr
                 \vdots & & \vdots \cr
                 x_{(m-1)n+1} & \dots & x_{mn} \end{pmatrix} \in
   \operatorname{Mat}(m,n,{\mathbb C}[x_1,\dots,x_{mn}]).$$
$M_{m,n}^t$ has codimension $(n-t+1)(m-t+1)$ in $M_{m,n}$ and 
$\operatorname{Sing}(M_{m,n}^t)=M_{m,n}^{t-1}$. Moreover, it is known that
the sets $M_{m,n}^i \setminus M_{m,n}^{i-1}$ for $1 \leq i \leq t$ form a 
Whitney stratification for $M_{m,n}^t$.\footnote{Passing to the convergent
power series ring, we can also define and use these notions in the setting 
of analytic space germs. Analogously all of the subsequent notions can be 
carried over to space germs and the analytic setting.}   
\end{rem}

\begin{rem}
By a result of Ma \cite{Ma93} extending several previous results for maximal, 
for submaximal and for $2$-minors (see e.g. \cite{Bu79}, \cite{Akin82}), 
the first syzygy module of the ideal of $M_{m,n}^t$ is generated by linear 
relations.
\end{rem}

As we are interested in minors of matrices with arbitrary power series as 
entries, the generic determinantal varieties are not the objects of our 
primary focus. They are only a tool to formulate the following definition:

\begin{df}
Let $F: {\mathbb C}^k \longrightarrow M_{m,n} \cong {\mathbb C}^{mn}$ be a 
polynomial map. Then $X=F^{-1}(M_{m,n}^t)$ is a determinantal variety of 
type $(m,n,t)$, if $\operatorname{codim}(X) = (m-t+1)(n-t+1)$ in 
${\mathbb C}^k$. 
\end{df}

\begin{rem} \label{linSyzF}
By a well-known result of Eagon and Hochster \cite{EH}, the condition on the 
codimension ensures that the local ring of a germ 
of a determinantal variety is Cohen-Macaulay. \\
The same condition on the codimension also implies that all syzygies 
arise from the linear ones of the generic matrix (by the extension of 
scalars via the map of local rings induced by $F$). As a consequence, 
the entries of the syzygy matrix are linear combinations of the
components $F_{i,j}$ of the map $F$.
\end{rem}

\begin{df}[\cite{EGZ}]
A germ $(X,0) \subset ({\mathbb C}^k,0)$ of a determinantal variety of type 
$(m,n,t)$ is called an EIDS at $0$, if the corresponding $F$ is transverse 
to all strata $M_{m,n}^i \setminus M_{m,n}^{i-1}$ of $M_{m,n}^t$ outside the 
origin.
\end{df}

\begin{rem}
For an EIDS $(X,0)$ defined by $F^{-1}(M_{m,n}^t)$ the singular locus is 
precisely the preimage of the singular locus of $M_{m,n}^t$, i.e. it is 
$F^{-1}(M_{m,n}^{t-1})$. Thus $(X,0)$ has an isolated singularity at the origin
iff $k \leq (m-t+2)(n-t+2)$; moreover, a smoothing of it exists if and only if 
the previous inequality is strict.
\end{rem}

As invertible row and column operations applied to a matrix do not change 
the ideal of its minors, this holds for any matrix of an EIDS. Moreover, two 
singularities should be considered equivalent, if one arises from the other 
by means of a coordinate change. These observations explain the structure 
of the group which describes the most suitable equivalence relation 
for determinantal singularities:

\begin{df}
Let ${\mathcal G}$ denote the group $(Gl_m({\mathbb C}\{\underline{x}\}) 
\times Gl_n({\mathbb C}\{\underline{x}\})) 
\rtimes \operatorname{Aut}({\mathbb C}\{\underline{x}\})$. Two determinantal 
singularities 
$(X_1,0), (X_2,0) \subset ({\mathbb C}^k,0)$ of the same type $(m,n,t)$ 
and defined by $F_1$ and $F_2$ respectively are called 
${\mathcal G}$-equivalent, if there is a tuple $(R,L,\Phi) \in {\mathcal G}$
such that $F_1 = L^{-1} (\Phi^* F_2)R$.  
\end{df}

Recall that a map germ is $s$-determined, if any other map which coincides 
with it in all terms up to degree $s$ is equivalent to it. If we want to
stress the existence of such an $s$ without specifying the value of $s$, the 
map is referred to as finitely determined.

\begin{theorem}\cite{Pthesis}
An EIDS $(X,0)$ corresponding to a map $F$, defined by the ideal of minors 
of the corresponding matrix (which we also denote by $F$ by abuse of notation),
is finitely ${\mathcal G}$-determined if and only if it has finite
${\mathcal G}$-Tjurina number 
$$\tau_{\mathcal G} = \operatorname{dim}_{\mathbb C}
     (\operatorname{Mat}(m,n;{\mathbb C}\{\underline{x}\}) / (J_F + J_{op}))$$
where $J_F$ denotes the submodule generated by the $k$ matrices, each holding 
the partial derivatives of the entries of $F$ w.r.t. one of the variables, and
$$J_{op}= \langle AF + FB \mid A\in \operatorname{Mat}(m,m;{\mathbb C}\{\underline{x}\}),
 \operatorname{Mat}(n,n;{\mathbb C}\{\underline{x}\}) \rangle$$
\end{theorem}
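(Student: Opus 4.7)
The plan is to follow the Mather--Damon template for finite determinacy: identify the tangent space to the $\mathcal{G}$-orbit of $F$ inside the ambient module $\operatorname{Mat}(m,n;\mathbb{C}\{\underline{x}\})$ with $J_F+J_{op}$, and then run the homotopy integration argument.

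First I would compute the infinitesimal action. Writing $L = \mathrm{id} + \epsilon A$, $R = \mathrm{id} + \epsilon B$ and $\Phi^* = \mathrm{id} + \epsilon\, \xi$ for a vector field $\xi$, and differentiating $L^{-1}(\Phi^* F)R$ at the identity yields $-AF + FB + \xi \cdot F$. The last summand ranges over $J_F$ and the first two over $J_{op}$, and the semidirect product structure of $\mathcal{G}$ ensures no further relations; hence $T_F(\mathcal{G}\cdot F) = J_F + J_{op}$ and $\tau_{\mathcal{G}}$ is the codimension of this submodule.

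For the ``only if'' direction, if $F$ is $s$-determined then every $G \in \mathfrak{m}^{s+1}\cdot \operatorname{Mat}(m,n;\mathbb{C}\{\underline{x}\})$ satisfies $F + G \sim_{\mathcal{G}} F$, so the $\mathcal{G}$-orbit contains an entire neighbourhood of $F$ in the relevant jet space; Mather's lemma on orbits in jet space then yields $\mathfrak{m}^{s+1}\cdot \operatorname{Mat}(m,n;\mathbb{C}\{\underline{x}\})\subset T_F(\mathcal{G}\cdot F)$, which forces $\tau_{\mathcal{G}}<\infty$.

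For the ``if'' direction, the main step, I would use the homotopy method. Finite-dimensionality of the quotient together with Nakayama's lemma produces an integer $s$ with $\mathfrak{m}^{s+1}\cdot \operatorname{Mat}(m,n;\mathbb{C}\{\underline{x}\})\subset \mathfrak{m}\cdot (J_F+J_{op})$. For a perturbation $G$ of that order I would form the family $F_t = F + tG$ and seek a path $(L_t,R_t,\Phi_t)\in \mathcal{G}$ with $L_t^{-1}(\Phi_t^* F_t) R_t = F$. Differentiating in $t$ reduces the problem to solving a linear equation $G = A(t) F_t + F_t B(t) + \xi(t)\cdot F_t$ with $t$-dependent coefficients. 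The hard part will be the parameter-dependent solvability: one has to promote the Nakayama inclusion at $F$ to the whole family $F_t$, which is where a version of the Weierstrass/Malgrange preparation theorem (equivalently, Damon's unipotence argument for geometric subgroups of $\mathcal{K}$) enters, and then verify that the resulting infinitesimal path integrates to a genuine element of $\mathcal{G}$.
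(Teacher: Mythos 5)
The paper does not prove this theorem at all: it is quoted from Pereira's thesis \cite{Pthesis}, so there is no in-paper argument to compare yours against. Judged on its own, your outline is the standard and correct route -- the Mather--Damon determinacy template applied to the group $\mathcal{G}$, which (as the paper itself remarks) is the group $\mathcal{K}_V$ for $V$ the generic determinantal variety, and this is essentially how the result is established in the literature. Two points deserve more care before this could count as a proof. First, a small imprecision: the orbit tangent space $T_F(\mathcal{G}\cdot F)$ is $\mathfrak{m}\,J_F + J_{op}$ (vector fields fixing the origin have coefficients in $\mathfrak{m}$), whereas $J_F + J_{op}$ is the \emph{extended} tangent space whose codimension is $\tau_{\mathcal{G}}$; the two codimensions are simultaneously finite, but the determinacy estimate in the ``if'' direction is naturally phrased with the former, and conflating them obscures the Nakayama step. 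Second, and more substantively, the entire weight of the argument rests on the assertion you make only in passing: that $\mathcal{G}=\mathcal{K}_V$ is a geometric subgroup of $\mathcal{K}$ in Damon's sense (naturality, tangent spaces that are finitely generated modules over the right system of rings, and the filtration/exponential condition that lets the infinitesimal solution of $G = A(t)F_t + F_tB(t) + \xi(t)\cdot F_t$ integrate to an actual path in $\mathcal{G}$). Verifying those axioms for this particular semidirect product is the real content; once it is done, the finite determinacy theorem is off the shelf, and your sketch closes. As written, you have correctly identified every ingredient but carried out none of the two nontrivial ones (the uniform-in-$t$ solvability via preparation, and the integration), so this is a sound plan rather than a proof.
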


\begin{rem}
The group ${\mathcal G}$ is a subgroup of the group ${\mathcal K}$ of Mather 
and coincides with it e.g. for complete intersections and Cohen-Macaulay 
codimension $2$ singularities. It also appears as the subgroup ${\mathcal K}_V$
of ${\mathcal K}$ in the literature, where $V$ is the generic determinantal 
singularity of the appropriate type (defined by $M_{m,n}^t$). In the cases, 
where ${\mathcal G}$-equivalence coincides with analytic equivalence, 
$\tau_{\mathcal G}$ is precisely the usual Tjurina number. In general, however,
${\mathcal G}$ is a proper subgroup of ${\mathcal K}$
as it respects the underlying matrix size. Thus there can e.g. 
be no element of ${\mathcal G}$ leading from one representation of Pinkham's 
famous example \cite{Pin} to the other, i.e. leading from a determinantal 
singularity of type $(2,4,2)$ to one of type $(3,3,2)$ (with the additional 
constraint of the matrix to be symmetric) or vice versa.
It is important to observe that a restriction to ${\mathcal G}$-equivalence 
does not fix the minimal size of the matrix, it only fixes some size, as any
determinantal singularity of type $(m,n,t)$ can easily be considered as one
of type $(m+1,n+1,t+1)$ by simply adding an extra line and an extra column 
of which all entries are zero except the one where the row and column meet, 
which should then be chosen to be $1$. If, on the other hand, a 
${\mathcal G}$-equivalence class of $(m+1) \times (n+1)$-matrices contains a matrix of this particular structure , the class will be referred to as 
essentially of type $(m,n,t)$.
\end{rem}

Keeping in mind, that the chosen equivalence can only be used to compare
determinantal singularities of compatible types, we shall restrict our 
considerations to families for which each fibre is a determinantal singularity 
of appropriate type. A similar restriction has already been used by Schaps
in \cite{Sch},
where she considered the notion of an $M$-deformation (or more generally an
$f$-deformation) by deforming the entries of a given matrix $M$. There she 
gives a criterion when an $M$-deformation is versal and provides examples of
situations, in which it is not. However, the $M$-deformation of Schaps does
not even provide a versal unfolding of the $F$ as her example 1 shows, which 
she attributes to D.~S.~Rim without further reference:

\begin{example}
Consider the determinantal singularity given by the $2$-minors of the matrix
$$\begin{pmatrix}
  x_1 & \alpha x_2 & \beta x_3 & \gamma x_4 \cr
  x_1 &        x_2 &       x_3 &        x_4
 \end{pmatrix} \sim_{\mathcal G}
 \begin{pmatrix}
  x_1 &          0 &     s x_3 &      t x_4 \cr
    0 &        x_2 &       x_3 &        x_4
 \end{pmatrix},$$
where $\alpha, \beta, \gamma \in {\mathbb C}$ sufficiently general (according
to Schaps) or more precisely $(s:t) \in {\mathbb P}^1 \setminus \{(0:1),(1:0),
(1:1)\}$, which immediately allows us to pass to the affine chart $t\neq 0$
writing $S$ for $\frac{s}{t}$.
Note that two such matrices corresponding to different points in 
${\mathbb P}^1 \setminus \{(0:1),(1:0),(1:1)\}$ lead to 
non-${\mathcal G}$-equivalent matrices, as this would alter the cross-ratio
of the four points $(0:1)$, $(1:0)$, $(1:1)$, $(s:t)$ in ${\mathbb P}^1$.
The corresponding space germs, on the other hand,  are isomorphic in a 
trivial way, as a change of $e$ only manifests itself by multiplying some
of the minors by invertible constants.\\
$\tau_{\mathcal G}$ is $5$ in this example and a versal
unfolding of the corresponding morphism $F$ is given by
$$\begin{pmatrix}
  x_1 &   a & Sx_3 + b & x_4 + c + e x_4 \cr
    d & x_2 & x_3      & x_4
  \end{pmatrix},$$
where Schaps only accepts the deformation parameters $a,b,c,d$, but refuses 
$e$, as it alters the original matrix. An interesting property of this family
is that changes to the parameter $e$ only have an effect on the 
${\mathcal K}$-equivalence class of the corresponding space germ, if at 
least one of the other four parameters is non-zero.
\end{example}

Choosing a monomial \footnote{Following tradition in the standard basis 
community we also refer to a module element, of which the only non-zero entry
is a monomial, as a monomial.} ${\mathbb C}$-basis $m_1,\dots,m_{\tau}$ of the 
$\operatorname{Mat}(m,n;{\mathbb C}\{\underline{x}\}) / (J_F + J_{op})$, it is then easy to
write down a semiuniversal unfolding of the morphism $F$, by simply perturbing the
corresponding matrix as follows:
$$F_{t_1,\dots,t_{\tau}} = F + \sum_{i=1}^{\tau} t_im_i.$$
The corresponding family of space germs is also versal for determinantal 
deformations of determinantal singularities\footnote{For simplicity of 
notation, we can consider a non-singular codimension $k$ germ as 
determinantal singularity essentially of type $(1,k,1)$.} in the following 
sense: Any family of space germs with given 
determinantal singularity in the zero-fibre and only determinantal 
singularities of appropriate type as fibres can be induced from the family of
space germs described by $F_{t_1,\dots,t_{\tau}}$. To make notation a bit 
shorter, we call such a family of space germs ${\mathcal G}$-versal. \\

But the relation between the base of a ${\mathcal G}$-versal family and a
versal family can be quite subtle and has not yet been studied in 
generality. 
In Pinkham's example, we only see one of the components of the 
base of the versal family as the base of the ${\mathcal G}$-versal family; 
in Rim's example cited above the relation is significantly less obvious:

\begin{example} (2.11 continued)
A ${\mathcal G}$-versal family with base $({\mathbb C}^5,0)$ has already been 
constructed above. \\
A versal family of space germs with the given special fibre is
\begin{eqnarray*}
I_{\mathcal X}& = & \langle (S-1)x_3x_4+Ax_4+Dx_3,x_2x_4-Bx_4-Gx_2,
                            x_1x_4+Cx_4-CG,\\
 & & \;\;Sx_2x_3+Ex_3+Hx_2,x_1x_3-Fx_3-\frac{1}{S}FH,x_1x_2+\frac{1}{S}EF\rangle
\end{eqnarray*}
over a base $(V,0)$ which is the germ of the cone of the Segre embedding of 
${\mathbb P}^3 \times {\mathbb P}^1$ in ${\mathbb P}^7$. More precisely,
\begin{eqnarray*}
I_{V} & = & \langle SAB+AE-(S-1)BH, SAC+SAF-(S-1)FH, SBC-EF,\\
 & & \;\; SBD+DE+(S-1)EG,CD+DF+(S-1)CG, \\
 & & \;\;SAG-DH-(S-1)GH \rangle
\end{eqnarray*}
(see example 3.4 in the Th\`ese of Buchweitz \cite{Bu81} for the construction).
The relation between these seemingly completely unrelated base spaces becomes
apparent, as soon as we consider them as the images of $(V(B),0)$ under
the projections to the first and second factor of
$({\mathbb C}^5,0) \times ({\mathbb C}^8,0)$, where
\begin{eqnarray*}
B & = & \langle (S-1)b-(S-1-e)A, a-(1+e)B,(1+e)d+C,\\
 & & \;\; (S-1)c+(S-1-e)D,a+E,Sd-F,c+(1+e)G,b-H \rangle.
\end{eqnarray*}
(Note that the coefficients $S$, $S-1$, $S-1-e$ and $1+e$ are all units in 
the local ring).   
\end{example} 

The non-trivial comparison illustrated above also makes the question of 
semiuniversality difficult to answer in generality; in some cases as, e.g. the 
Hilbert-Burch case, it is obvious, in others it may boil down to a case
by case check for different matrix structures. In view of the possibility of
semiuniversality in further cases, we continue with the considerations in 
full generality and leave this question to be answered at the time of
application to specific matrix sizes (or even matrices). 

At this point, it is important to observe that the ${\mathcal G}$-versal 
families obtained by the above construction are indeed flat, as any relation 
lifts to a relation of the family by the second part of remark \ref{linSyzF}.\\
Having restricted our interest to ${\mathcal G}$-versal deformations, we can
now state the object, which we want to determine algorithmically in the 
subsequent section: the locus in the base ${\mathbb C}^{\tau_{\mathcal G}}$ 
of the ${\mathcal G}$-versal family, above which the fibres possess 
singularities, i.e. the ${\mathcal G}$-discriminant locus of the family.  \\

For later use, we also need one further construction concerning determinantal
singularities: a Tjurina modification as introduced in \cite{Tju} and used
e.g. in \cite{vSt} and \cite{FZ}. This construction relies on the fact that 
the rows of an $m \times n$ matrix $A$ representing a point of $M_{m,n}^t$
span a $(n-t+1)$-dimensional subspace in ${\mathbb C}^n$. This gives rise 
to a rational map 
$P: M_{m,n}^t \dashrightarrow \operatorname{Grass}(n-t+1,n)$, 
of which we can resolve indeterminacies to obtain a map 
$\hat P: W =\overline{ \Gamma_{P}(M_{m,n}^t \setminus M_{m,n}^{t-1})} 
        \subset \mathbb C^{m\cdot n} \times \operatorname{Grass}(n-t+1,n)$. 
Combining this with a map $F$ defining a determinatal
variety $X$ of type $(m,n,t)$ as in definition 2.3,
we obtain the following commutative diagram:
\begin{equation}
    \begin{xy}
        \xymatrix{
            Y=X\times_{M_{m,n}^t} W \ar[r]^{\;\;\;\;\;\;\hat F} \ar[d]_\pi &
            W \ar[d]_\rho \ar[dr]^{\hat P} &
            \\
            X \ar[r]^F &
            M_{m,n}^t \ar@{-->}[r]^{P}&
            {\mathbb P}^r\\
        }
    \end{xy}
    \label{eqn:DiagramTjurinaMod}
\end{equation}
If the dimension of $X$ is large enough to allow the exceptional locus 
to be a proper subset of $Y$, this is indeed a modification. Explicit equations
for $Y$ are given in \cite{FZ}, in the simplest case, $t=n \leq m$, the 
equations are given by
$$ F \cdot \underline{s} =0,$$
where $\underline{s} = (s_1,\dots,s_n)$ denotes the tuple of variables of 
$\operatorname{Grass}(n-1,n)=\operatorname{Grass}(1,n) = {\mathbb P}^{n-1}$.\\ 

Analogously the columns can be used for the same construction, as they
span a $(m-t+1)$-dimensional subspace in ${\mathbb C}^m$. 

\section{The discriminant for EIDS}\label{discr}

To study the discriminant, we need to detect the singularities of the
fibres. As already mentioned in the introduction, we shall decompose the
discriminant and compute the contributions separately. To this end, we shall 
exploit the smoothness criterion of Hironaka in a similar way as in \cite{BF}, 
but with a slightly more involved train of thought. For readers' convenience,
we postpone the general case and work out the key ideas in the smallest 
non-trivial case first: $2$-minors of $2 \times (2+k)$ matrices.

\begin{df} \cite{Hir} 
Let $(X,0) \subset ({\mathbb C}^n,0)$ be a germ with defining ideal 
$I_{X,0}$ generated by 
$f_1,\dots,f_s \in {\mathbb C}\{\underline{x}\}:=
  {\mathbb C}\{x_1,\ldots,x_n\}$, 
and assume that these power series form a standard basis of the 
ideal with respect to some local degree ordering. Assume further that 
the power series $f_i$ are sorted by increasing order. The tuple
$\nu^*(X,0) \in {\mathbb N}^s$ then denotes the sequence of orders of 
the $f_i$.
\end{df}

The tuple $\nu^*$ detects singularities, as the following lemma states, 
which is implicitly already present in Hironaka's work:

\begin{lem} The germ
$(X,0) \subset ({\mathbb C}^n,0)$ is singular at $p$ if and only if
$$\nu^*(X,0) >_{lex} (\underbrace{1,\dots,1}_{\operatorname{codim}(X)})$$ 
with respect to the lexicographical ordering $>_{lex}$.
\end{lem}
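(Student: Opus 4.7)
The plan is to identify the tuple $\nu^*(X,0)$ with data about the tangent cone of $(X,0)$. For a local degree ordering, the initial form $\operatorname{in}(f_i)$ is well defined as the homogeneous component of lowest degree $\nu(f_i)$, and the standard basis property forces $(\operatorname{in}(f_1),\ldots,\operatorname{in}(f_s))$ to generate the initial ideal $\operatorname{in}(I_{X,0})$, which is precisely the defining ideal of the tangent cone $C_0(X) \subset {\mathbb C}^n$; in particular $\dim C_0(X) = \dim X = n - c$, where $c := \operatorname{codim}(X)$.

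For the direction ``$X$ smooth $\Rightarrow \nu^* = (1,\ldots,1)$'' I would choose analytic coordinates $y_1,\ldots,y_n$ so that $I_{X,0} = (y_1,\ldots,y_c)$. These $c$ coordinate functions form a minimal standard basis of order one with respect to any local degree ordering, giving $\nu^*(X,0) = (1,\ldots,1)$ with exactly $c$ ones.

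For the converse, assume $\nu^*(X,0) = (1,\ldots,1)$ with $c$ entries. Then the corresponding $c$ standard basis elements $f_1,\ldots,f_c$ have linear initial parts $L_1,\ldots,L_c$ which generate $\operatorname{in}(I_{X,0})$, so $C_0(X) = V(L_1,\ldots,L_c)$. Since this tangent cone has dimension $n-c$, the $L_i$ cannot be linearly dependent (otherwise the codimension of $V(L_1,\ldots,L_c)$ would drop below $c$). Hence they are linearly independent, $C_0(X)$ is a linear subspace of dimension $\dim X$, and in coordinates where $L_i=y_i$ the implicit function theorem applied to $f_i = y_i + \text{h.o.t.}$ shows $(X,0)$ is smooth.

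The step requiring the most care is the interpretation of the lexicographic comparison $\nu^*(X,0) >_{lex} (1,\ldots,1)$ when the given standard basis is not minimal, as the two tuples then have different lengths. I would handle this by first reducing to a minimal standard basis, an operation that preserves $I_{X,0}$, its initial ideal and hence the tangent cone; after this reduction the chain of equivalences ``some $\nu_i \ge 2$'' $\Leftrightarrow$ ``$C_0(X)$ is not a linear subspace of codimension $c$'' $\Leftrightarrow$ ``$X$ is singular at $0$'' makes the comparison unambiguous and yields the lemma.
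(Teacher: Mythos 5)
Your argument is correct, and in fact the paper offers no proof at all of this lemma: it is stated as ``implicitly already present in Hironaka's work'' and used as a black box, so there is no in-paper argument to compare against. Your route through the tangent cone is a sound way to supply the missing proof. The one load-bearing ingredient is the fact that, for a \emph{local degree} ordering, the initial forms of a standard basis generate the ideal $\operatorname{in}(I_{X,0})$ of the tangent cone (this is exactly why Mora's tangent cone algorithm works; see e.g.\ Greuel--Pfister); you use it correctly, and it is what makes the equivalence ``all $\nu_i=1$'' $\Leftrightarrow$ ``$C_0(X)$ is a linear space of codimension $\operatorname{codim}(X)$'' $\Leftrightarrow$ ``$(X,0)$ smooth'' go through, the last step being the implicit function theorem applied to the $c$ generators with independent linear parts (equivalently, the Jacobian criterion). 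You are also right to flag the non-minimality issue as the genuinely delicate point: with the prefix convention for $>_{lex}$, a redundant standard basis of a smooth germ would give a tuple strictly larger than $(1,\dots,1)$, so the statement is only unambiguous after passing to a minimal standard basis (which is how Hironaka's $\nu^*$ is normalized, and which leaves $\operatorname{in}(I_{X,0})$ unchanged). Two trivial points you could make explicit for completeness: all entries of $\nu^*$ are $\geq 1$ because $I_{X,0}$ is proper, and the minimal standard basis has at least $\operatorname{codim}(X)$ elements by Krull's height theorem; together these show $\nu^*(X,0)\geq_{lex}(1,\dots,1)$ always, so that ``not equal'' is the same as ``$>_{lex}$'' and your two implications really do yield the stated equivalence.
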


Of course, the above definition and lemma also make sense for the germ at 
any other point $p$ on $X$, as can be seen by moving the point $p$ to $0$ by 
a coordinate transformation and then passing to the germ. \\
 
Based on these considerations, we now want to decide whether for a given 
$2 \times (2+k)$-matrix $M$, defining a determinantal variety $X$ of type 
$(2,2+k,2)$,  $\nu^*(X,p) >_{lex} (1,\dots,1) $ at some point $p$. 
If $\nu^*(X,p)$ starts with  $1$ as first entry, the singularity needs to 
be essentially of type $(1,1+k,1)$ at $p$, as $M$ has to contain one 
entry which is of order zero and hence a unit in the 
local ring at $p$. In other words: if all entries of $M$ are of order at 
least $1$, the frist entry of $\nu^*(X,p)$ cannot be lower than $2$. 
We hence know that $X$ is singular at $p$, if and only if one of the 
following two alternatives holds: 
\begin{enumerate}
\item[(A)] the ideal $I_A$ generated by the entries of $M$ has order at 
least $1$ at $p$,
\item[(B)] $X$ is essentially of type $(1,1+k,1)$ and singular.
\end{enumerate}
In case (A), it suffices to determine where the ideal of $1$-minors of 
$M$ is of order at least $1$ to describe this contribution to the singular 
locus.\\ 
The second case is significantly more subtle: Such a unit 
might be sitting at any position in the matrix, which implies a priori that 
the respective contributions need to be computed for each matrix entry.
We know, however, that in case (B) the matrix is of rank precisely $1$ at $p$, 
i.e. all column vectors are collinear and hence correspond to a point in 
${\mathbb P}^1$. At such points, the Tjurina transformation is an 
isomorphism, which allows us to pass to the Tjurina transform, determine 
its singular locus outside $V(I_A)$ and take the closure thereof as the 
contribution (B).\\

Considering a larger matrix size, say a singularity of type $(m,n,m)$ with 
$n \geq m$, and the corresponding maximal minors, we can proceed analogously, 
but may a priori encounter $m$ cases corresponding to the singularity being 
essentially of type $(m-i,n-i,m-i)$ with $0 \leq i < m$. As larger minors 
also vanish, whenever all minors of a smaller size vanish, it suffices to
consider the vanishing locus of the $(m-1) \times (m-1)$ minors to determine
contribution (A). \\
For contribution (B), we need to assume that the rank of the matrix is 
precisely $m-1$. Hence its columns span a hyperplane in ${\mathbb C}^m$
and we can again make use of the fact that the Tjurina transform is an 
isomorphism at such points. So we can simply determine the singular locus
of the Tjurina transform outside $V(I_A)$ and take the closure thereof as we 
did for contribution (B) in the previous case. To give a concise overview of 
the necessary compuations, this is summarized in algorithm 
\ref{alg_discr_max}. There the input is restricted to polynomials for purely 
practical reasons: it should consist of finitely many terms.

\begin{algorithm}[h]
\caption{Discriminant for EIDS of type $(m,n,m)$ (sequential)}
\label{alg_discr_max}
\begin{algorithmic}[1]

\REQUIRE $M \subset \operatorname{Mat}(m,n;{\mathbb C}[x_1,\dots,x_r])$, $m\leq n$ 
         defining EIDS at $\underline{0}$
\ENSURE  ideals $I_A$, $I_B$ describing the discriminant of the versal family 
         of the given EIDS as follows:\\
         \begin{itemize}\itemsep0pt
         \item $I_A$ describes contribution (A) 
         \item $I_B$ describes contribution (B)
         \item $I_A \cap I_B$ describes the discriminant  
         \end{itemize}

\smallskip

\STATE matrix $N := versalG(M)$
\STATE ideal  $I_A := \langle$ (m-1)-minors of $N \rangle$
\STATE $I_A = eliminate(I_A;x_1,\dots,x_r)$ 
\STATE ideal  $I_{Tj} := (s_1,\dots,s_m) \cdot N$, with generators denoted 
              as $f_1, \dots,f_n$
\STATE ideal  $I_B := I_{Tj} + minor\left(\left(\frac{\partial f_i}{\partial v_j}\right)_{i,j},n-m+1\right)$ where $\underline{v}=(\underline{x},\underline{s})$
\STATE $I_B = (I_B:\langle \underline{s} \rangle^\infty)$
\STATE $I_B = eliminate(I_B;x_1,\dots,x_r,s_1,\dots,s_n)$
\STATE $I_B = (I_B : I_A^\infty)$
\RETURN ($I_A$,$I_B$)

\end{algorithmic}
\end{algorithm}

The algorithm \ref{alg_discr_max} requires a saturation in step 6 to
remove any contribution of the irrelevant ideal. As this can be a significant 
bottelneck, a parallel approach can be helpful: replace step 6 by running
step 7 in all charts $D(s_i)$ of the projective space and intersect the 
resulting ideals $I_{B,i}$ to obtain $I_B$. 

\begin{rem}
In lines 3 and 7 of \ref{alg_discr_max}, we use elimination which means that 
we endow the resulting complex space with the annihilator structure (cf. 
\cite{GLS}, Def. 1.45), which is not compatible with base change. We might as 
well have chosen the Fitting structure relying on resultant methods instead of 
elimination, as this is compatible with base change.\\
The choice of elimination over resultants is mostly based on the purely 
practical fact that the implementation of elimination in {\sc Singular} is
significantly more refined than the one of resultants.
\end{rem}

\begin{example}
\begin{itemize}
\item[(a)] 
Consider the determinantal singularity defined by the $2$-minors of a matrix 
of the form
$$M=\begin{pmatrix}
    x_1 & \dots & x_{r-1} & x_r \cr
    x_{r+1} & \dots & x_{2r-1} & f(x_1,\dots,x_{r-1})
    \end{pmatrix}.$$
The ${\mathcal G}$-versal family with this special fiber can be written as
$$M_{\underline{t}}=\begin{pmatrix}
    x_1+a_1 & \dots & x_{r-1}+a_{r-1} & x_r \cr
    x_{r+1} & \dots & x_{2r-2} & F(x_1,\dots,x_{r-1},a_r,\dots,a_s)+a_0
    \end{pmatrix},$$
where $F(x_1,\dots,x_{r-1},a_r,\dots,a_s)$ corresponds to a versal deformation 
with section of $f(\underline{x})$. For determining the contributions to 
the discriminant, we now apply our algorithm and obtain:
\begin{eqnarray*}
I_A& = & \langle x_1+a_1, \dots, x_{r-1}+a_{r-1},
                 x_r, \dots, x_{2r-1}, 
                 F(a_1,\dots,a_s)+a_0 \rangle 
                 \cap {\mathbb C}[\underline[{a}] \\
   & = & \langle F(\underline{a})+a_0 \rangle\cr
I_B& = & {\rm discriminant\;\; of\;\;  F(x_1,\dots,x_{r-1},a_r,\dots,a_s)+a_0}
\end{eqnarray*}
The locus, above which we see determinantal singularities, is the smooth 
hypersurface $V(F(a_1,\dots,a_s)+a_0)$ and the remaining part of the 
discriminant is precisely the discriminant of the versal family with section
in the right hand lower entry.
\item[(b)] As the next example, we consider 3 families of ICMC2 singularities:
$$M_1=\begin{pmatrix}
      x & y & z \cr
      w & x & y+f(v)
      \end{pmatrix} \;\;\;
  M_2=\begin{pmatrix}
      x & y & z \cr
      w & u & x+f(v)
      \end{pmatrix}$$
  $$M_3=\begin{pmatrix}
      x & y & z \cr
      w & x+g(u,v) & y+h(u,v)
      \end{pmatrix},$$
where $f(v)=v^k$ for some $k \in {\mathbb N}$ and $(g(u,v),h(u,v))$ describes
a fat point in the plane. Then, $M_1$ is a $3$-fold in $({\mathbb C}^5,0)$ and 
the other two are $4$-folds in $({\mathbb C}^6,0)$. Direct computation yields
the following versal families:
\begin{eqnarray*}
M_{1;\underline{a}, \underline{b}}& = &\begin{pmatrix}
      x & y & z \cr
      w & x+\sum_{i=0}^{k-1} a_iv^i & y+v^k+\sum_{i=0}^{k-2} b_iv^i
      \end{pmatrix}\\
M_{2;\underline{b}} & = & \begin{pmatrix}
      x & y & z \cr
      w & u & x+v^k+\sum_{i=0}^{k-2} b_iv^i
      \end{pmatrix} \\
M_{3;\underline{t}} & = & \begin{pmatrix}
      x & y & z \cr
      w & x+G(u,v,\underline{t}) & y+H(u,v,\underline{t})
      \end{pmatrix},
\end{eqnarray*}
with suitably chosen $G(u,v,\underline{t})$ and $H(u,v,\underline{t})$ 
(cf. \cite{FN}). The Tjurina transforms for the three cases are:
\begin{eqnarray*}
I_{Tj,1} & = & \langle  sx+tw,sy+t(x+\sum_{i=0}^{k-1} a_iv^i),
                        sz+t(y+v^k+\sum_{i=0}^{k-2} b_iv^i) \rangle \cr
I_{Tj,2} & = & \langle  sx+tw,sy+tu,
                        sz+t(x+v^k+\sum_{i=0}^{k-2} b_iv^i) \rangle \cr
I_{Tj,3} & = & \langle  sx+tw,sy+t(x+G(u,v)),
                        sz+t(y+H(u,v))
\end{eqnarray*} 
Passing to the two affine charts of ${\mathbb P}^1$, we immediately see that
all the $V(I_{Tj,i})$ are non-singular. Hence, $I_B=\langle 1 \rangle$ and
$I_A$ describes the whole discriminant in these cases.
\item[(c)] To illustrate contributions (A) and (B) not only in the extremal 
cases shown above, we give two surface and two 3-fold examples from the list 
of simple ICMC2 singularities \cite{FN}, which for the surface case conincides
with Tjurina's list of rational triple point singularities in \cite{Tju}.\\
As first example of a surface singularity, we consider Tjurina's $A_{0,1,2}$ 
singularity. Its versal family is:
$$\begin{pmatrix}
  x_3 & x_4+a_4 & x_2^2+a_5\cr
  x_4^3+x_4a_1+a_2 & x_2+a_3 & x\end{pmatrix}$$
for which the two contributions to the discriminant are:
\begin{eqnarray*}
I_A & = & \langle a_3^2+a_5, a_4^3+a_1a_4-a_2 \rangle\\
I_B & = & \langle a_5(4a_1^3+27a_2^2)\rangle .
\end{eqnarray*}
So contribution (A) is a $5$-dimensional smooth subvariety of the base and 
contribution (B) consists of two hypersurfaces, a smooth one and a cylinder 
over a plane cusp. \\
As next example, we consider Tjurina's $D_0$ singularity:
$$\begin{pmatrix}
  x_3+a_1 & x_2+a_5 & x_1 \cr
  x_1+x_3a_3+x_4a_2+a_4 & x_4+a_6 & x_3^2+x_2x_4+a_7
  \end{pmatrix}, $$
for which a direct computation yields 
$$I_A = \langle a_1a_3+a_2a_6-a_4,a_1^2+a_5a_6+a_7 \rangle,$$
which again happens to be a smooth subvariety of codimension $2$. Contribution (B) is an irreducible hypersurface of degree $16$, of which we do not give the
explicit equations here.\\
The first 3-fold example has the versal family
$$\begin{pmatrix}
  x_5^3+x_4x_5+x_4a_1+a_2 & x_2+x_5a_3+a_4 & x_1 \\
  x_3 & x_4+x_5a_5+a_6 & x_5^3+x_2x_5+a_7
  \end{pmatrix}. $$
Here contribution (A) is an irreducible hypersurface of degree 7, whereas
contribution (B) is the hypersurface defined by
$$I_B = \langle a_7(a_1^3-a_2) \rangle.$$
The versal family in the final example is:
$$\begin{pmatrix}
  x_3+a_1 & x_5^2+x_1 & x_2+x_5a_4+a_5 \\
  x_1+x_5a_2+a_3 & x_3^2+x_2x_5+x_4a_6+a_7 & x_4
  \end{pmatrix}.$$
In this case, contribution (A) is an irreducible hypersurface of degree 5 and
contribution (B) is an irreducible hypersurface of degree 14.
\end{itemize}
\end{example}

Up to now, we had restricted our considerations to ideals of maximal minors to
allow a clearer exposition of the material. For considering non-maximal minors,
we first observe that the ${\mathbb P}^{m-1}$ which was used in case (B) above 
is just a manifestation of a Grassmannian in the simplest case, hyperplanes in 
${\mathbb C}^m$. Passing to non-maximal minors, however, the Grassmannian has 
more structure which we need to recall before continuing with our study of the 
discriminant.\\

Classically the Grassmannian describing the set of $r$-dimensional linear 
subspaces of an $n$-dimensional vector space $V$ or equivalently of 
$(r-1)$-dimensional linear subspaces 
${\mathbb P}^{r-1} \subset {\mathbb P}^{n-1}$ can be embedded into projective 
space by the Pl\"ucker embedding:
\begin{eqnarray*}
\operatorname{Grass}(r,n) & \longrightarrow & {\mathbb P}\left(\bigwedge^r V \right) \cong
              {\mathbb P}^{{n \choose r} -1}\\
\operatorname{span}(v_1,\dots,v_r) & \longmapsto & v_1 \wedge \dots \wedge v_r
\end{eqnarray*}
The image of this embedding is closed; the equations of the image 
are quadratic in the variables of ${\mathbb P}^{{n \choose r} -1}$: 
we denote the variables as $x_{i_1,\dots,i_r}$ for any given sequence of 
indices $1 \leq i_1 < i_2 < \dots < i_r \leq n$. Purely for convenience of 
notation, we extend this to any subset of $\{1,\dots,n\}$ with $r$ elements. 
To this end, we set $x_{i_1,\dots,i_r}=0$, if two elements of the 
index conincide and postulate that permutations of indices change the
sign by the sign of the permutation. Then each Pl\"ucker relation is of the
form
$$\sum_{j=0}^r (-1)^j x_{i_1,\dots,i_{r-1},k_j} \cdot 
                    x_{k_0,\dots,{\hat k_j},\dots,k_r} =0$$
where $i_1,\dots,i_{r-1}$ and $k_0,\dots,k_r$ are subsets of $\{1,\dots,n\}$,
i.e. the ideal of the image of the Pl\"ucker embedding is generated by quadratic 
polynomials.
At this point it is important to stress that a Pl\"ucker coordinate 
$x_{i_1,\dots,i_r}$ can be interpreted as the $r$-minor of the matrix with
columns $v_1,\dots,v_r$ involving the rows $i_1,\dots,i_r$. \\

Now we are ready to consider the general case of an EIDS of any type 
$(m,n,t)$ with  $1 < t  \leq m \leq n$. The case (A) does not present 
additional difficulties here, as the ideal $I_A$ can be computed directly 
as $(t-1)$-minors of the given matrix $M$. As we have seen before, case (B) 
comprises all singular points at which the matrix $M$ has rank precisely $t-1$.
Appending $t-1$ columns, whose entries are $n(t-1)$ new variables, and imposing
the condition that at least one $(t-1)$-minor of this part does not vanish, 
allows us to restrict to this part of $X$ by taking the $t$ minors of the new
matrix. With our previous considerations about the Grassmannian, this can 
also be more conveniently expressed by introducing Pl\"ucker coordinates 
instead of the additional columns and then leads to a set of equations of 
the form
$$\sum_{l=1}^t (-1)^l y_{i_1,\dots,{\hat i_l},\dots,i_t} m_{i_l,j} = 0$$
for all strictly increasing $t$-tuples $\{i_1,\dots,i_t\} \subset 
\{1,\dots,m\}$ and for all $j \in \{1,\dots,n\}$, where $m_{l,j}$ denotes 
the entry of $M$ at the position $(l,j)$. We now consider the ideal generated
by these polynomials and by the generators of the image of the Pl\"ucker embedding. 
After saturating out the irrelevant ideal, the new ideal describes the part of 
$X$ which is relevant 
for contribution (B). We can then compute the singular locus thereof, saturate 
out the maximal ideal in the $y_{i_1,\dots,i_t}$ and then eliminate the orignal 
variables $\underline{x}$ and all variables $y_{i_1,\dots,i_t}$ as before to 
obtain $I_B$. For practical 
purposes, a parallel approach using a covering of the Grassmannian with affine charts 
should again be the choice in implementations due to the extremely high 
number of variables and the particularly simple structure of the ideal of 
the Grassmannian in each chart.\\

Comparing the construction above with the general construction of the Tjurina
transform in \cite{FZ}, we see that the use of the Grassmannian in both 
settings is the same and that $I_B$ captures precisely the singular locus of 
the Tjurina transform as before. Therefore, we have decomposed the 
discriminant of a determinantal singularity in the following way:

\begin{prop}
Let $(X,0)\subset {\mathbb C}^N$ be a determinantal singularity of type 
$(m,n,t)$, $m>n$, defined by $F^{-1}(M_{m,n}^t)$ and let ${\mathcal X}$ be its 
${\mathcal G}$-versal family. Further assume that $dim(X) \geq m$. 
Then the discriminant of ${\mathcal X}$ decomposes naturally into two 
contributions:
\begin{enumerate}
\item[(A)] points in the base space, above which there are determinantal 
      singularities
\item[(B)] points in the base space, above which there are singularities 
           leading to singular points in the Tjurina transform.
\end{enumerate}
\end{prop}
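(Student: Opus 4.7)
The plan is to apply Hironaka's smoothness criterion (Lemma 3.2) fibrewise to ${\mathcal X}$, dichotomizing the singular points of each fibre according to whether some $(t-1)$-minor of the perturbed matrix $F_{\underline{t}}$ is a unit at the point in question. A parameter value $\underline{t}_0$ lies in the discriminant iff there exists $p\in X_{\underline{t}_0}$ with $\nu^*(X_{\underline{t}_0},p)>_{lex}(1,\dots,1)$. The defining equations of $X_{\underline{t}_0}$ at $p$ are the $t$-minors of $F_{\underline{t}_0}$, and the first entry of $\nu^*$ equals $1$ precisely when some $(t-1)$-minor of $F_{\underline{t}_0}$ is a unit at $p$, since such a unit, together with an order-one factor from a neighbouring entry, produces a $t$-minor of order one. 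This gives two exhaustive alternatives, corresponding to cases (A) and (B).

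In case (A), every $(t-1)$-minor of $F_{\underline{t}_0}$ vanishes at $p$, so $p$ belongs to the determinantal locus of the fibre and $(X_{\underline{t}_0},p)$ is intrinsically a determinantal singularity of strictly smaller type, contributing to the locus described by (A). In case (B), some $(t-1)$-minor is a unit at $p$, so $F_{\underline{t}_0}(p)$ has rank exactly $t-1$ (lower rank is excluded by being outside (A), higher rank is excluded since $p\in X_{\underline{t}_0}$). Consequently the rows span a well-defined $(n-t+1)$-dimensional subspace of $\mathbb{C}^n$, so the rational map $P$ of diagram (\ref{eqn:DiagramTjurinaMod}) extends holomorphically to $p$ and $\rho$ is a local isomorphism there. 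Pulling back to the fibre, $\pi:Y_{\underline{t}_0}\to X_{\underline{t}_0}$ is an isomorphism on a neighbourhood of $p$, so $X_{\underline{t}_0}$ is singular at $p$ iff the Tjurina transform $Y_{\underline{t}_0}$ is singular at the unique preimage of $p$. Taking closures inside the base gives exactly contribution (B).

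The hypothesis $\dim(X)\geq m$ is invoked only to ensure that the exceptional locus of the Tjurina modification is a proper subset of $Y$, so that $\pi$ is genuinely birational and the identification in case (B) is meaningful. The main obstacle is verifying that the concrete Pl\"ucker/Grassmannian construction preceding the proposition really implements the dichotomy: one must check that, after adjoining the Pl\"ucker variables $y_{i_1,\dots,i_t}$, imposing the equations $\sum_{l=1}^t (-1)^l y_{i_1,\dots,\hat{i}_l,\dots,i_t} m_{i_l,j}=0$ together with the Pl\"ucker relations, and saturating with respect to the irrelevant ideal, one obtains a model of the Tjurina transform over the rank-$(t-1)$ stratum; its singular locus, pushed forward to the base by eliminating the $\underline{x}$ and the $\underline{y}$, yields exactly contribution (B), while $I_A$ is cut out directly by the $(t-1)$-minors of $F_{\underline{t}}$. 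The decomposition follows, and $I_A\cap I_B$ describes the full ${\mathcal G}$-discriminant.
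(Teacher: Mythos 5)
Your argument is correct and follows essentially the same route as the paper, whose ``proof'' of this proposition is precisely the discussion preceding it: Hironaka's criterion via $\nu^*$, the exhaustive dichotomy between points where all $(t-1)$-minors of the perturbed matrix vanish (rank $<t-1$, hence a genuine determinantal singularity, contribution (A)) and points where some $(t-1)$-minor is a unit (rank exactly $t-1$, where the Tjurina modification is a local isomorphism, contribution (B)), together with the remark that $\dim(X)\geq m$ keeps the exceptional locus proper. The only small imprecision is your ``precisely when'': a unit $(t-1)$-minor does not force the first entry of $\nu^*$ to equal $1$ (the fibre may still be singular there as a complete intersection), but since the case split you actually carry out is the rank-based one, this does not affect the argument.
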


The condition on the dimension of $X$ in the preceding proposition ensures that 
the exceptional locus of the Tjurina modification is a lowerdimensional 
closed subset of the Tjurina transform. In the above decomposition, the
contribution related to the Tjurina transform may be empty in some cases,
whereas the other one always contains at least the origin. 

\section{Applications to the ICMC2 case}

The above considerations not only yield a decomposition of the discriminant. 
They show that determinantal singularities $(F^{-1}(M_{m,n}^t),0)$ possess in 
general two kinds of contributions to the singular locus: The structural
contribution arising from $F^{-1}(M_{m,n}^{t-1})$, to which the Tjurina 
transform is partly blind, and a contribution arising from the map $F$ itself,
which manifests itself in the Tjurina transform. The well-studied special
case of ICMC2 singularities, in which ${\mathcal G}$-versality and versality
are known to conincide, provides a good setting to consider this in more 
detail and illustrate the consequences.  

\begin{lem}
Let $(X,0) \subset ({\mathbb C}^N,0)$ be an ICMC2 singularity, i.e. of type 
$(t,t+1,t)$, with generic linear entries.
\begin{enumerate}
\item It has a smooth Tjurina transform, if and only if $N \geq 2t$.
\item The singular locus of the Tjurina transform is a determinantal variety
      of type $(t+1,N,t+1)$ of dimension $2t-N-1$ in ${\mathbb P}^{t-1}$ for
      $t+1 \leq N < 2t$. 
\end{enumerate}
\end{lem}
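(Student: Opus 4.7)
My approach is to work directly with the explicit equations of the Tjurina transform in its column version. Since $F$ is $t\times(t+1)$ and its columns at a rank-$(t-1)$ point span a hyperplane in $\mathbb{C}^t$, the transform $Y\subset\mathbb{C}^N\times\mathbb{P}^{t-1}$ is cut out by the $t+1$ equations
\[
\sum_{k=1}^{t} s_k F_{kj}(\underline{x}) = 0, \qquad j=1,\dots,t+1,
\]
where $\underline{s}=(s_1,\dots,s_t)$ are homogeneous coordinates on $\mathbb{P}^{t-1}$. I first reduce to the exceptional fibre: outside it, the projection $\pi\colon Y\to X$ is an isomorphism, because above a point of $X$ with $\operatorname{rk} F=t-1$ the fibre is the single point of $\mathbb{P}^{t-1}$ representing the cokernel line of $F$. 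Since a generic linear ICMC2 of type $(t,t+1,t)$ has an isolated singularity at the origin and $F(0)=0$, this confines $\operatorname{Sing}(Y)$ to the exceptional fibre $\pi^{-1}(0)=\{0\}\times\mathbb{P}^{t-1}$.

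The next step is a Jacobian computation on this fibre. Expanding $F_{kj}=\sum_{i=1}^N a^{(i)}_{kj}\,x_i$ and passing to an affine chart of $\mathbb{P}^{t-1}$, the partials with respect to the $s$-variables evaluate to $F_{lj}(0)=0$ on $\{\underline{x}=0\}$, so only the $x$-partials contribute. The Jacobian at $(0,\underline{s})$ therefore reduces to the $(t+1)\times N$ matrix
\[
A(\underline{s})_{j,i} \;=\; \sum_{k=1}^{t} a^{(i)}_{kj}\,s_k,
\]
whose entries are linear forms on $\mathbb{P}^{t-1}$. Since $Y$ has codimension $t+1$ in $\mathbb{C}^N\times\mathbb{P}^{t-1}$, the point $(0,\underline{s})$ is smooth on $Y$ if and only if $A(\underline{s})$ has full row rank $t+1$. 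Hence $\operatorname{Sing}(Y)$ coincides with the rank-drop locus of this $(t+1)\times N$ matrix of linear forms on $\mathbb{P}^{t-1}$, which by definition is a determinantal variety of type $(t+1,N,t+1)$ sitting inside the exceptional $\mathbb{P}^{t-1}$.

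To finish I invoke the standard codimension formula for determinantal loci: the rank-$\le t$ locus of a generic $(t+1)\times N$ matrix has codimension $N-t$ in its ambient, so its preimage under $A$ has dimension $(t-1)-(N-t)=2t-N-1$ in $\mathbb{P}^{t-1}$. This is negative precisely when $N\ge 2t$, in which case the rank-drop locus is empty and $Y$ is smooth, proving (1); for $t+1\le N<2t$ the dimension is non-negative and the determinantal description of $\operatorname{Sing}(Y)$ follows, proving (2).

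The main technical obstacle is verifying that the construction $F\mapsto A(\underline{s})$ preserves genericity, so that the expected codimension is actually attained rather than only bounded above. A closer inspection shows that the coefficients entering distinct entries of $A(\underline{s})$ are pairwise disjoint subsets of the parameters $a^{(i)}_{kj}$; hence for $F$ in a Zariski-open set of matrices with linear entries, $A(\underline{s})$ is a generic $(t+1)\times N$ matrix of linear forms on $\mathbb{P}^{t-1}$, and a standard Bertini-type argument delivers the required transversality.
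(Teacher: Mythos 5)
Your proposal is correct and follows essentially the same route as the paper: reduce to the exceptional fibre $\{0\}\times{\mathbb P}^{t-1}$ using that the modification is an isomorphism off the origin, observe that the $s$-derivatives of the bidegree-$(1,1)$ equations vanish there so the Jacobian collapses to a $(t+1)\times N$ matrix $A$ of generic linear forms on ${\mathbb P}^{t-1}$, and identify $\operatorname{Sing}(Y)$ with the maximal-minor locus of $A$, of expected codimension $N-t$. Your added remarks on why $A$ inherits genericity (disjoint coefficient sets) and on the complete-intersection form of the Jacobian criterion only make explicit what the paper leaves implicit.
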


\begin{proof}
In the case of generic linear entries in the matrix $M$ of $X$, the ideal of 
the Tjurina transform $Y$ is generated by bi-homogeneous polynomials of bidegree, 
$(1,1)$. Therefore the Jacobian matrix of it is of the form
$$\begin{pmatrix}
   A & \mid & M^T
   \end{pmatrix}, $$
where the first columns hold the derivatives w.r.t. the original variables
and the remaining ones the derivatives w.r.t. the variables of the 
${\mathbb P}^{t-1}$. Then $A$ is a $(t+1) \times N$ matrix with homogeneous
entries of degree $1$, which only involve the variables of the 
${\mathbb P}^{t-1}$ and which are generic, because the entries of $M$ were 
generic. As the singular locus of $X$ is just the origin, the Tjurina 
modification is an isomorphism outside the origin and we therefore only
need to evaluate the Jacobian criterion above $V(\underline{x})$. This causes 
the last $t$ columns of the Jacobian matrix and all generators of the ideal 
of the Tjurina transform to vanish. Hence the singular locus of the Tjurina
transform is precisely the vanishing locus of the maximal minors of $A$. \\
For the first claim, it suffices to observe that the codimension of the 
singular locus of $Y$ is $N-t$ in $\underline{0} \times 
{\mathbb P}^{t-1}$, which needs to exceed $t-1$ for $Y$ to be smooth, 
i.e. we obtain the condition $N>2t-1$. These arguments also prove the second claim.
\end{proof}

As the matrix describing the singular locus of the Tjurina transform is a 
square matrix for $N=t+1$, we immediately get the following corollary:

\begin{corollary}
The singular locus of the Tjurina transform of an ICMC2 singularity 
$(X,0) \subset ({\mathbb C}^{t+1},0)$ 
of type $(t,t+1,t)$ with generic linear entries is a hypersurface of degree 
$t+1$ in ${\mathbb P}^{t-1}$.
\end{corollary}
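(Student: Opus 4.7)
The plan is to specialize the preceding lemma to the case $N = t+1$. For $t\ge 2$ this value of $N$ lies in the range $t+1 \le N < 2t$ required by part (2) of the lemma, so that part applies directly and identifies the singular locus of the Tjurina transform with a determinantal variety of type $(t+1,N,t+1) = (t+1,t+1,t+1)$ sitting inside ${\mathbb P}^{t-1}$. But a determinantal variety of type $(t+1,t+1,t+1)$ is by definition the zero locus of a single $(t+1)\times(t+1)$ determinant, so it is automatically a hypersurface, consistent with the dimension count $2t - N - 1 = t - 2$ in ${\mathbb P}^{t-1}$ provided by the lemma.

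It remains only to read off the degree, and for this I would return to the matrix $A$ explicitly constructed in the proof of the lemma. In the present situation $A$ has size $(t+1)\times N = (t+1)\times(t+1)$, and each of its entries is a homogeneous linear form in the $t$ homogeneous coordinates of ${\mathbb P}^{t-1}$. Consequently $\det(A)$ is a homogeneous polynomial of degree $t+1$ in these coordinates. The genericity hypothesis on the linear entries of $M$ carries over to genericity of the entries of $A$, so $\det(A)$ is not the zero polynomial and therefore cuts out a genuine hypersurface of the claimed degree. There is no real obstacle here; the entire argument is bookkeeping of degrees and sizes once the lemma has been invoked, which is precisely why the author records the statement as a corollary rather than a separate theorem.
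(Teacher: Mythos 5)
Your argument is exactly the paper's: the author derives the corollary by observing that for $N=t+1$ the matrix $A$ from the lemma becomes a square $(t+1)\times(t+1)$ matrix of generic linear forms on ${\mathbb P}^{t-1}$, so its determinant cuts out a hypersurface of degree $t+1$. Your proposal is correct and adds only the (harmless) extra bookkeeping of checking the dimension count $2t-N-1=t-2$ against the hypersurface claim.
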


For the other extreme of $N=2t-1$, i.e. for isolated singular points in the 
Tjurina transform, it is also possible to determine the number of points 
as it coincides with the value of the only non-zero term in the Hilbert 
Polynomial, the constant term, in this case. This polynomial itself can be
obtained from a graded free resolution (given by the Eagon-Northcott complex
for determinantal varieties of type $(m,n,m)$). We only give an example of such
a computation:

\begin{corollary}
The singular locus of the Tjurina transform of an ICMC2 singularity 
$(X,0) \subset ({\mathbb C}^{5},0)$
of type $(3,4,3)$ with generic linear entries consists of 10 points in general
position in ${\mathbb P}^2$.
\end{corollary}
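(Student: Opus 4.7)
The plan is to specialize the preceding lemma to $t = 3$, $N = 5$ and then compute the length of the resulting zero-dimensional determinantal scheme via the Eagon--Northcott complex.

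Since $t+1 = 4 \leq N = 5 < 6 = 2t$, part (2) of the lemma places the singular locus of the Tjurina transform inside $\mathbb{P}^{t-1} = \mathbb{P}^2$ as a $0$-dimensional determinantal scheme of type $(4,5,4)$. As in the proof of that lemma, it is cut out by the $4 \times 4$ minors of a $4 \times 5$ matrix $A$ whose entries are generic linear forms in the homogeneous coordinates $y_1, y_2, y_3$ of $\mathbb{P}^2$.

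To count the points I would invoke the Eagon--Northcott complex for the ideal $I$ of maximal minors of $A$. Since $A$ has generic linear entries and the expected codimension $(5-4+1)(4-4+1)=2$ is attained in $\mathbb{P}^2$, this complex is exact and provides the minimal graded free resolution
$$0 \longrightarrow S(-5)^{4} \longrightarrow S(-4)^{5} \longrightarrow S \longrightarrow S/I \longrightarrow 0$$
over $S = \mathbb{C}[y_1, y_2, y_3]$. The corresponding Hilbert series is $H_{S/I}(\tau) = (1 - 5\tau^4 + 4\tau^5)/(1-\tau)^3$, whose numerator $P(\tau)$ satisfies $P(1) = P'(1) = 0$ but $P''(1) = 20$, so $P(\tau) = (1-\tau)^2 Q(\tau)$ with $Q(1) = 10$. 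Hence the Hilbert polynomial of $S/I$ is the constant $10$ and the zero-dimensional scheme $Z := V(I)$ has length $10$.

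It remains to verify reducedness and general position. The same resolution yields the Hilbert function $h_{S/I}(d) = \min\{\binom{d+2}{2}, 10\}$, which is the maximum possible for a $0$-dimensional subscheme of $\mathbb{P}^2$ of length $10$; for a reduced such scheme this is precisely the content of general position, forcing in particular that the points span $\mathbb{P}^2$ and lie on no common conic or cubic. Reducedness itself is a separate, Zariski-open condition on the coefficients of the linear entries of $A$, and hence holds once those entries are sufficiently generic. The main subtlety, rather than a genuine obstacle, is this separation of the combinatorial count (provided by Eagon--Northcott) from the transversality step (provided by genericity of $A$).
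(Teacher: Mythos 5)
Your proposal is correct and follows essentially the same route as the paper: both read off the length of the zero-dimensional determinantal scheme from the graded free resolution $0 \to S(-5)^4 \to S(-4)^5 \to I \to 0$, the paper via the closed formula $\frac{1}{2}\left(\sum l_i^2 - \sum k_i^2\right)=\frac{1}{2}(4\cdot 25 - 5\cdot 16)=10$ of von Bothmer--Bus\'e--Fu and you via the equivalent Hilbert-series computation. Your additional verification of reducedness and of the general-position claim through the maximal Hilbert function $\min\{\binom{d+2}{2},10\}$ is a welcome refinement of a point the paper leaves implicit.
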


\begin{proof}
It is well-known that the Hilbert polynomial can be read off from the Betti 
diagram of a minimal free resolution (see e.g. \cite{GPf} or \cite{EisSyz}).
Here the situation is particularly simple: the choice of $N=5$ and $t=3$  
leads to a singular locus $\Sigma$ of the Tjurina transform which only consists
of points in ${\mathbb P}^2$ and can be described by the vanishing of the 
$4$-minors of a $5 \times 4$ matrix with generic linear entries. 
In particular, this is the Hilbert-Burch case for which von Bothmer, Bus\'e 
and Fu give an even more explicit formula in \cite{BBF}: 
Given the minimal graded free resolution
$$ 0 \longrightarrow \bigoplus_{i=1}^t {\mathcal O}_{{\mathbb P}^2}(-l_i)
     \longrightarrow \bigoplus_{i=1}^{t+1}{\mathcal O}_{{\mathbb P}^2}(-k_i)
     \longrightarrow I_{\Sigma} \longrightarrow 0,  $$
the Hilbert polynomial and hence the number of points is
$$\frac{\sum_{i=1}^{t} l_i^2 - \sum_{i=1}^{t+1} k_i^2}{2}.$$
In our setting, all $l_i$ are have the value $-5$ and all $k_i$ are $-4$ which
yields $\frac{1}{2} (4\cdot 25 - 5 \cdot 16) = 10$ points.
\end{proof} 

As the Tjurina transform can be non-singular, there are cases in which the
contribution $(B)$ of the discriminant of the versal family is empty as e.g. 
for the generic ICMC2 of type $(2,3,2)$ in $({\mathbb C}^k,0)$ for $k \geq 4$. 
But there are, of course, many non-generic matrices with singular Tjurina 
transform even in these dimensions. The smoothness of the Tjurina transform 
is actually a statement about the adjacencies of an EIDS, as the following 
lemmata show:

\begin{lem}
Let $(X,0) \subset ({\mathbb C}^k,0)$, be an EIDS for which contribution (B) 
to the discriminant of the versal family is not empty, then $(X,0)$ is 
adjacent to an $A_1$ singularity and has a smoothing passing through $A_1$
singularities.
\end{lem}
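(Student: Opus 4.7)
The plan exploits the structural observation that contribution (B) parameterizes, by construction, fibres of the ${\mathcal G}$-versal family that acquire an isolated singularity at a point of corank one, i.e.\ where the defining matrix has rank exactly $t-1$, and that the Tjurina modification is a local isomorphism at such points. At any corank-one singular point the germ of the fibre reduces to an ICIS, and my plan is to transfer a Morsification from standard ICIS theory back to the versal family of $(X,0)$, producing both the adjacency to $A_1$ and the required smoothing.

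Concretely, I would fix a generic parameter value in contribution (B) and a singular point $p$ of the corresponding fibre at which the perturbed matrix has rank $t-1$. Local row and column operations on that matrix, which preserve the ideal of $t$-minors, put the matrix into the block form
$$\begin{pmatrix} I_{t-1} & 0 \\ 0 & M' \end{pmatrix},$$
where $M'$ is an $(m-t+1) \times (n-t+1)$ matrix with entries in the maximal ideal of the local ring at $p$. The ideal of $t$-minors then reduces to the ideal generated by the $(m-t+1)(n-t+1)$ entries of $M'$, and the codimension condition built into the EIDS definition implies these entries form a regular sequence. Hence the germ of the fibre at $p$ is an isolated complete intersection singularity.

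I would then argue that, under this local identification, the restriction of the ${\mathcal G}$-versal family near the chosen parameter value induces a versal unfolding of this ICIS: the ${\mathcal G}$-parameters perturb the entries of $M'$ arbitrarily and surject onto the Tjurina space of the local ICIS. Standard ICIS theory then provides a Morsification, i.e.\ a sub-direction in parameter space along which the fibres have only $A_1$ singularities before becoming smooth. Concatenating an arc from the origin of the base to the chosen parameter value with the Morsification — and choosing the directions generically to cross the smooth stratum of (B) transversely while avoiding (A) and the higher codimension strata — yields a $1$-parameter family that smooths $(X,0)$ and whose intermediate singular fibres carry only $A_1$ singularities. This establishes both claims simultaneously.

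The main obstacle is the claim of versality: showing that the restricted ${\mathcal G}$-versal family is truly versal for the local ICIS at $p$, rather than merely complete. This amounts to verifying that at a corank-one point the ${\mathcal G}$-action on matrices degenerates to the ordinary right-left equivalence on the residual block $M'$, so that the $\tau_{\mathcal G}$-many unfolding parameters do indeed surject onto the Tjurina algebra of the ICIS. A secondary technical point is the globalization: a transversality argument against the stratification of the discriminant is needed to guarantee that the chosen smoothing arc does not create additional singularities elsewhere in its fibres.
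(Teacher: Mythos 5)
Your proposal is correct and follows essentially the same route as the paper: both identify the singular points of fibres over contribution (B) as corank-one points where the Tjurina modification is an isomorphism and the local germ is an ICIS, and then invoke the adjacency of a non-smooth ICIS to $A_1$ together with a generic arc through (B) to produce the smoothing. The paper's proof is terser---it phrases the local identification via the Tjurina transform and leaves implicit the versality/Morsification details you flag as the main obstacle---but the underlying mechanism is the same.
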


\begin{proof}
If $p$ is a point in the base of the versal deformation belonging to
contribution (B), then the Tjurina transform of the fibre above this point is 
singular and has only ICIS singularities, which are themselves adjacent to an $A_1$.
Moreover, as contribution (B) is non-empty, it contains by construction an open set 
which does not meet contribution (A). Above this open set, there are no fibres with
determinantal singularities, and the Tjurina modification is already an isomorphism
for these fibres. Hence the original singularity is also adjacent to an $A_1$ and 
possesses a smoothing which passes through $A_1$ singularities. 
\end{proof}

\begin{rem}
There are smoothable EIDS for which no smoothing passes through an $A_1$ 
singularity as can be seen from the results in \cite{FZ} and \cite{Zac}: For surface 
singularities of type $(2,3,2)$ in $({\mathbb C}^4,0)$ this is precisely the 
determinantal singularity with generic linear entries. 
For 3-fold singularities of type $(2,3,2)$ in $({\mathbb C}^5,0)$ these are
precisely the singularities with $b_3-b_2=-1$. As we always 
have $b_2=1$, this difference implies $b_3=0$, whence the Tjurina transform 
is smooth, no adjacency to an $A_1$ is possible and the contribution (B) to
the discriminant is empty. However, these singularities are smoothable
through a different mechanism: They pass through the EIDS with generic 
linear entries of the appropriate ambient dimesion. For the latter, any
non-trvial deformation is a smoothing.\\
In dimensions, in which the determinantal singularity with generic linear
entries is a rigid EIDS, the contribution (B) will always
be empty, as the terminal object in the adjacency diagramm is a rigid 
determinantal singularity which causes contribution (A) to be the
whole base of the versal family. Therefore, we cannot decide in general 
whether a singularity is adjacent to an $A_1$ based solely on the fact that
contribution (B) is empty. But even in this case, it can make sense to 
consider the locus above which there are ICIS singularities, by omitting the
final saturation by $I_A$ in the computation of contribution (B).
\end{rem}

These last observations also indicate that passing to the Tjurina transform 
provides valuable information about the original singularity, but this 
information
also relies on knowledge about the contribution (A). The cases of surfaces in 
$({\mathbb C}^4,0)$ and $3$-folds in $({\mathbb C}^5,0)$ show how different the
behaviour can be. To illustrate this, we first discuss Wahl's conjecture about
the relation between Milnor and Tjurina number in the surface case: we reprove
the easier direction that quasihomogeneity implies $\mu = \tau -1$ for the 
special situation that there are only isolated singularities in the Tjurina 
transform. This already indicates, where it might be possible to find 
counterexamples for the other direction, i.e. non-quasihomogeneous codimension $2$ surface singularities satisfying $\mu =\tau -1$, and we pursue this thought
to construct a whole class of counterexamples. 
Contrasting the rather controlled situation of surfaces, 
we then give an explanation of the observations of Damon and Pike \cite{DP} in the 
$3$-fold case, relying on the same mechanism, but with very different outcome. 

\begin{lem} \label{Wahl_ok}
Let $(X,0) \subset ({\mathbb C}^4,0)$ be a quasihomogeneous isolated determinantal singularitiy 
of type $(2,3,2)$ with at most isolated singularities in the Tjurina transform. Then 
$$\mu = \tau - 1.$$
\end{lem}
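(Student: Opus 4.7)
The plan is to reduce the statement to two simpler ingredients: the Greuel-Hamm equality $\mu=\tau$ for quasi-homogeneous ICIS, together with a direct computation for the generic ICMC2 (the cone over the twisted cubic). The bridge between $X$ and these two pieces is the Tjurina modification $\pi:\tilde X\to X$ together with the decomposition of the discriminant developed in Section~\ref{discr}.

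First I would verify that each singular point $p_i$ of $\tilde X$ is a quasi-homogeneous ICIS. The ICIS property follows from the explicit description in Section~\ref{discr}: in an affine chart of $\mathbb{P}^1$, the germ $(\tilde X,p_i)$ is cut out in a smooth $5$-dimensional ambient space by the two components of $M\cdot(s_1,s_2)^{\top}$, and these two equations form a regular sequence since $M$ has rank exactly $1$ at any such $p_i$. Quasi-homogeneity of $X$ lifts to $\tilde X$ by assigning the $\mathbb{C}^*$-action a suitable weight on $(s_1,s_2)$ so that $M\cdot(s_1,s_2)^{\top}$ is homogeneous. Greuel-Hamm then yields $\mu(\tilde X,p_i)=\tau(\tilde X,p_i)$ for each $i$.

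Next, the Milnor fibre formula for ICMC2 surfaces in $(\mathbb{C}^4,0)$ from \cite{FZ, Zac} gives the decomposition
\[\mu(X)=\mu_0+\sum_{i=1}^{s}\mu(\tilde X,p_i),\]
where $\mu_0$ is the Milnor contribution of the generic ICMC2 appearing as the generic fibre above contribution~(A) in the versal base. Similarly, the decomposition of the discriminant from the preceding section induces a parallel additive splitting
\[\tau(X)=\tau_0+\sum_{i=1}^{s}\tau(\tilde X,p_i)\]
with $\tau_0$ the $\mathcal{G}$-Tjurina number of the generic ICMC2. Since the generic ICMC2 (smooth Tjurina transform, $s=0$) is itself a quasi-homogeneous ICMC2, a direct computation using the Hilbert-Burch resolution for the cone over the twisted cubic yields the base-case identity $\mu_0=\tau_0-1$. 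Combining all three ingredients,
\[\mu(X)=\mu_0+\sum_i\mu(\tilde X,p_i)=(\tau_0-1)+\sum_i\tau(\tilde X,p_i)=\tau(X)-1.\]

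The main obstacle is the additive decomposition $\tau(X)=\tau_0+\sum_i\tau(\tilde X,p_i)$. One must verify that the $\mathcal{G}$-versal base of $X$ splits cleanly, modulo the $\mathcal{G}$-action, into an independent ``generic determinantal factor'' of dimension $\tau_0$ and one ``local ICIS factor'' of dimension $\tau(\tilde X,p_i)$ for every isolated singularity on $\tilde X$. Any overlap between these two kinds of deformation directions would spoil the clean count; ruling it out requires a careful comparison of the tangent cohomologies of $X$ and $\tilde X$, guided by the explicit structure of the Tjurina modification. The base-case identity $\mu_0=\tau_0-1$ is the only genuinely new numerical input and is exactly what produces the ``$-1$'' in the final formula.
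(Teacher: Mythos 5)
Your skeleton is the same as the paper's: the formula $\mu(X)=1+\sum_{p\in\operatorname{Sing}(Y)}\mu(Y,p)$ from \cite{FZ} for the Tjurina transform $Y$, the equality $\mu(Y,p)=\tau(Y,p)$ at the quasihomogeneous ICIS points of $Y$ (this is \cite{LoSt}/Greuel, not Greuel--Hamm, and in the relevant chart $Y$ is cut out by the \emph{three} entries of $(s,t)\cdot M$, not two), and the splitting $T^1(X)\cong H^1(Y,T_Y)\oplus\bigoplus_p T^1(Y,p)$, which reduces the whole statement to $\operatorname{dim}_{\mathbb C}H^1(Y,T_Y)=2$. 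But that last equality is where essentially all of the work lies, and you both leave it unproved (you explicitly defer it as ``the main obstacle'') and sketch a route to it that cannot succeed as stated.

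The problem is that you want $\operatorname{dim}_{\mathbb C}H^1(Y,T_Y)$ to equal a universal constant $\tau_0$ attached to the generic determinantal stratum, obtained from a clean splitting of the versal base in which the quasihomogeneity of $X$ plays no role. The paper's own example $\left(\begin{smallmatrix}x_1 & x_2 & x_3 \\ x_2^2+2x_4^2-x_3^3 & -x_1^2+x_3^3 & -x_2^2+x_4^2+x_1^3\end{smallmatrix}\right)$, a non-quasihomogeneous type $(2,3,2)$ surface singularity with only isolated singularities in its Tjurina transform, has $\operatorname{dim}_{\mathbb C}H^1(Y,T_Y)=3$; so this dimension is not determined by the generic stratum, and quasihomogeneity of $X$ must be used a second time, in bounding $H^1(Y,T_Y)$ itself and not merely at the points of $Y$. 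In the paper this is done by a lengthy standard-basis comparison of the module $K$ presenting $T^1(X)$ with its dehomogenization $\overline{K}$ presenting $\bigoplus_p T^1(Y,p)$: all potentially extra leading terms but one are killed by explicit syzygies, and the last one, $w\cdot\operatorname{NF}(\operatorname{spoly}(sC_4,\{C_1,C_2,C_3\}))$, reduces to zero precisely by the Euler relation --- this is where the hypothesis enters. The lower bound $\geq 2$ is then obtained by semicontinuity from the singularity $\left(\begin{smallmatrix}x_1&x_2&x_3\\ x_4&x_1&x_2\end{smallmatrix}\right)$, to which all these singularities are adjacent; that is the one place where your appeal to a distinguished base case is on target. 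Your identity $\mu_0=\tau_0-1$ for the cone over the twisted cubic is a consistency check, not a substitute for the missing argument.
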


\begin{proof}
In this proof we denote the Tjurina transform of $X$ by $Y$ and we assume that 
the presentation matrix of $X$ is chosen to have quasihomogeneous entries and 
respect row and column weights as in \cite{FK1}. This implies that $Y$ is quasihomogeneous w.r.t. the same weights.
 
From \cite{FZ}, we know that 
$$\mu(X) = 1 + \sum_{p \in \operatorname{Sing}(Y)} \mu(Y,p),$$ 
i.e. it differs from the sum over the Milnor numbers of the singularities 
$(Y,p)$ of the Tjurina transform by 1. 
The singularities of the Tjurina transform are at most ICIS singularities 
and hence satisfy 
$\mu(Y,p) \geq \tau(Y,p)$ with equality precisely in the case
of quasihomogeneous singularities \cite{LoSt}. So it remains to establish 
the relation 
$$\tau(X) = \sum_{p \in \operatorname{Sing}(Y)} \tau(Y,p) + 2$$
to prove the claim. However, after a few preliminary considerations this leads to a 
Gr\"obner basis computation which we will sketch for a general matrix of the given 
properties in the rest of the proof.\\

To this end, we first recall from \cite{FZ} that
$$  T^1(X) \cong N' = H^1(Y_0,T_{Y_0}) \oplus 
            \bigoplus_{p \in \operatorname{Sing}(Y)} T^1(Y,p)$$
implying for the corresponding dimensions
$$\tau(X) = \operatorname{dim}_{\mathbb C} H^1(Y, TY) 
           +\sum_{p \in \operatorname{Sing}(Y)} \tau(Y,p).$$ 
As all of these 
${\mathbb C}\{\underline{x}\}$-modules are finite dimensional 
${\mathbb C}$-vector spaces, this also induces an isomorphism of  
${\mathbb C}$-vector spaces which can be expressed in terms of a monomial 
basis of $T^1_X$. To complete the proof, we therefore need to identify those
basis elements in $T^1_X$ which do not contribute to $\oplus_{p \in \operatorname{Sing}(Y)} T^1_{Y,p}$.\\

To keep the presentation of the rest of the proof as simple as possible, we denote
the variables by $x,y,z,w$ and denote the tuple of these four variables by 
$\underline{x}$ in the following.
Since the Tjurina transform only contains isolated singularities, $X$ can be
expressed in terms of a matrix 
$$A = \begin{pmatrix} x  &  y  &  z \cr
                      a  &  b  &  c \end{pmatrix}$$
according to \cite{FZ}, where 
$a,b,c \in {\mathfrak m} \subset {\mathbb C}\{\underline{x}\}$ and no term of
$a$ is divisible by $x$. 
\\
The Tjurina transform is described by the ideal 
$I_{Tj}= \langle sx+ta,sy+tb,sz+tc \rangle$
with Jacobian matrix
$$\begin{pmatrix}
   s    & ta_y   & ta_z   & ta_w & x & a\cr
   tb_x & s+tb_y & tb_z   & tb_w & y & b\cr
   tc_x & tc_y   & s+tc_z & tc_w & z & c
  \end{pmatrix}, $$
where a subscript stands for the partial derivative by the respective variable. 
At $t=0$ there is a $3$-minor $s^3$, whence $(0,0,0,0) \times (1:0)$ cannot
be a singular point of the Tjurina transform. Therefore it suffices to 
consider the chart $D(t)$. We know from \cite{FZ} that 
$$N'= (({\mathbb C}[s,t]\{\underline{x}\})^3/K)_{(1)}$$
where the subscript $(1)$ denotes the degree $1$ part in $s$ and $t$ and the module $K$
is generated by the columns of the Jacobian matrix of above and  
$I_{Tj} \cdot {\mathbb C}\{\underline{x}\}^3$. As we are interested in $K_{(1)}$ and
possibly slices of higher degree, but not in $K_{(0)}$,
we now replace the columns $5$ and $6$ of the above matrix by their multiples with
$s$ and $t$.\
To fix a numbering of the generators of $K_{(1)}$, we keep the resulting numbering
of the columns: starting with the partial derivatives by the $x,y,z$ and $w$ and 
continuíng with the $s$ and $t$ multiples we just introduced, the first eight 
generators are the columns of the following matrix:
$$\begin{pmatrix}
   s    & ta_y   & ta_z   & ta_w & sx & sa & tx & ta\cr
   tb_x & s+tb_y & tb_z   & tb_w & sy & sb & ty & tb\cr
   tc_x & tc_y   & s+tc_z & tc_w & sz & sc & tz & tc
  \end{pmatrix}. $$
The last $9$ generators are then ordered as in the following matrix:
$$\begin{pmatrix}
sx+ta &sy+tb & sz+tc &   0   &      & 0 \cr
  0   &  0   &   0   & sx+ta &\dots & 0 \cr
  0   &  0   &   0   &   0   &      & sz+tc
\end{pmatrix}.$$ 
We also know that the sum over the $T^1(Y,p)$, which are all sitting above the origin 
of ${\mathbb C}^5$, can be computed as the ($\underline{x}$-local, but $s$-global) Tjurina 
module in the respective chart of ${\mathbb P}^1$ by considering the module 
$$T=({\mathbb C}[s]\{\underline{x}\})^3/\overline{K}$$
obtained by dehomogenizing the previous module w.r.t. the variable $t$. Our task will
now be a comparison of  $K_{(1)}$ and $\overline{K}$ and of the respective quotients
by means of the corresponding leading ideals, which we obtain from a standard basis
computation. \\

By the assumption that the entries of the original matrix are contained 
in ${\mathfrak m}$, only the first $4$ generators of $K$ and
$\overline{K}$ can possibly contain $\underline{x}$-degree zero entries. Using a mixed 
ordering which first compares w.r.t. a global ordering $s >_{lex} t$, then a negative 
lexicographical ordering in $x < y < z < w$ and finally a module ordering, it is 
now easy to see that the first three generators have $s$ as entry in their leading 
monomial and that the leading monomials  are in pairwise different entries not 
allowing any non-vanishing s-polynomial among these. We can thus directly
eliminate all $s$-terms from the other generators by reducing with these three 
generators and assume from now on that generator $4$ and all further ones do not 
involve any $s$ -- both in  $K$ and in $\overline{K}$. From now on, we use these 
reduced columns, not involving $s$, instead of the original columns $4$ to $17$ 
denoting the $i$-th columns thereof by $C_i$. 

We immediately see that any 
s-polynomial computation arising from $C_i$ and $C_j$ with $i,j \geq 4$ is identical 
for $K_{(1)}$ and $t \cdot \overline{K}$; moreover, it does not involve any $s$ as
we had chosen an elimination ordering for $s$. Therefore the standard bases for 
$\langle C_4,\dots,C_{17} \rangle$ and its dehomogenization w.r.t. $t$ are in 1:1 
correspondence. 
Considering an s-polynomial between $C_i$ and $C_j$ with 
$5 \leq i \leq 17$ and the respective $1 \leq j \leq 3$, a direct computation shows 
that its normal form w.r.t. $\{C_1,C_2,C_3\}$ already lies in 
the module  $t \cdot \langle C_5,\dots , C_{17} \rangle_{{\mathbb C}\{\underline{x}\}}$
and hence reduces to zero w.r.t. a standard basis generated by $C_4,\dots,C_{17}$. 
The respective relations are stated in the table below. There the abbreviation $\operatorname{jacob}$ 
denotes the jacobian matrix and $j\in\{1,2,3\}$ stands for the component in which the leading term 
is found, i.e. the index of the appropriate $C_j$ for the s-polynomial.\\[0,2cm]
\begin{tabular}{lcl} \label{tab1}
NF(spoly$(C_5,C_j),\{C_1,C_2,C_3\})$ &=& $t(C_6 + a_xC_9 + a_yC_{10} + a_z C_{11} + b_xC_{12}$ \cr
    & + & $b_y C_{13} + b_zC_{14} + c_xC_{15}+c_yC_{16}+c_zC_{17})$\cr
NF(spoly$(C_6,C_j),\{C_1,C_2,C_3\})$ &=& $t((a_x+b_y+c_z)C_6 + \operatorname{det}(\operatorname{jacob}(a,b,c))C_7 $\cr
    & + & $ (a_yb_x-a_xb_y+a_zc_x+b_zc_y-a_xc_z-b_yc_z)C_8 $\cr
    & + & $ (b_zc_y-b_yc_z)C_9 + (a_yc_z-a_zc_y)C_{10} $\cr
    & + & $ (a_zb_y-a_yb_z)C_{11} + (b_xc_z-b_zc_x)C_{12} $\cr
    & + & $ (a_zc_x-a_xc_z)C_{13} + (a_xb_z-a_zb_x)C_{14} $ \cr
    & + & $ (b_yc_x-b_xc_y)C_{15} + (a_xc_y-a_yc_x)C_{16} $\cr
    & + & $(a_yb_x-a_xb_y) C_{17})$\cr
NF(spoly$(C_7,C_j),\{C_1,C_2,C_3\})$ &=& $t(C_8+C_9+C_{13}+C_{17})$\cr
NF(spoly$(C_8,C_j),\{C_1,C_2,C_3\})$ &=& $t C_6$\cr
NF(spoly$(C_9,C_1),\{C_1,C_2,C_3\})$ &=& $t(a_xC_9 + b_xC_{12} +c_xC_{15})$\cr
NF(spoly$(C_{10},C_1),\{C_1,C_2,C_3\})$&=& $t(a_xC_{10} + b_xC_{13} +c_xC_{16})$\cr
NF(spoly$(C_{11},C_1),\{C_1,C_2,C_3\})$&=& $t((a_xC_{11} + b_xC_{14} +c_xC_{17})$\cr
NF(spoly$(C_{12},C_2),\{C_1,C_2,C_3\})$&=& $t(a_xC_9 + b_xC_{12} +c_xC_{15})$\cr
NF(spoly$(C_{13},C_2),\{C_1,C_2,C_3\})$&=& $t(a_xC_{10} + b_xC_{13} +c_xC_{16})$\cr
NF(spoly$(C_{14},C_2),\{C_1,C_2,C_3\})$&=& $t((a_xC_{11} + b_xC_{14} +c_xC_{17})$\cr
NF(spoly$(C_{15},C_3),\{C_1,C_2,C_3\})$&=& $t(a_xC_9 + b_xC_{12} +c_xC_{15})$\cr
NF(spoly$(C_{16},C_3),\{C_1,C_2,C_3\})$&=& $t(a_xC_{10} + b_xC_{13} +c_xC_{16})$ \cr
NF(spoly$(C_{17},C_3),\{C_1,C_2,C_3\})$&=& $t((a_xC_{11} + b_xC_{14} +c_xC_{17})$ \cr
\end{tabular}\\[0,2cm]
This only leaves potential differences between standard bases of
$ K_{(1)} $ and $\overline{K}$ in s-polynomials arising from $C_4$ and $C_i$ for 
$ 1 \leq i \leq 3$.\\

In $K_{(1)}$, we can see no contributing s-polynomial which arises between $C_i$ and $C_4$ 
with $1 \leq i \leq 3$, because its leading monomial would be in 
$(s,t)$-degree $2$. In $\overline{K}$ on the other hand, such an s-polynomial is relevant, 
as $C_4$ is non-zero (due to the need for a pure power in $w$ to appear to allow finite dimension), 
is of $s$-degree zero and has lower $w$-order in each entry than the corresponding entry of 
the second row of $M$. 
Considering this more closely, we see that $sC_4$ and $s^2C_4$ can both contribute 
to relevant s-polynomials with $C_1, C_2, C_3 $.
On the other hand, using the linear combinations of $C_1,C_2,C_3$ indicated by the 
columns of the right adjoint of the $3 \times 3$ square matrix with columns $C_1,C_2,C_3$,
the minor of the Jacobian matrix corresponding to $C_1,C_2,C_3$ reduces to zero in all
entries. Hence $s^3C_4$ does not provide any new contribution but reduces to lower degree 
terms in $s$. Therefore we obtain precisely two $(s,t)$-degrees for which the standard 
basis of $\overline{K}$ contains elements not necessarily appearing in the one of 
$K_{(1)}$. This implies that their leading monomials can be part of the computed 
monomial basis of $N'$, but reduce to zero in $T=({\mathbb C}[x]\{\underline{x})^3/\overline{K}$. 
If they are in the monomial basis of $N'$, they contribute to ${\operatorname{dim}}_{\mathbb C}H^1(Y, T_Y)$.\\

Showing that no $\underline{x}$-multiple of two leading monomials of the still remaining s-polynomials
is non-zero in $N'$, we obtain  
$\operatorname{dim}_{\mathbb C}H^1(Y, T_Y) \leq 2$ which in turn proves one side of the inequality 
in the original claim. Again, we simply state the relations:
\begin{eqnarray*}
x \cdot {\rm NF}({\rm spoly}(sC_4,\{C_1,C_2,C_3\})) & = & a_wtC_9+b_wtC_{12}+c_wtC_{15}-atC_4\cr
y \cdot {\rm NF}({\rm spoly}(sC_4,\{C_1,C_2,C_3\})) & = & a_wtC_{10}+b_wtC_{13}+c_wtC_{16}-btC_4\cr
z \cdot {\rm NF}({\rm spoly}(sC_4,\{C_1,C_2,C_3\})) & = & a_wtC_{11}+b_wtC_{14}+c_wtC_{17}-ctC_4
\end{eqnarray*}
Of course, such relations continue to hold after multiplication with $s$ and subsequent
reduction by $C_1, C_2, C_3$, which now leaves only one case to be considered:
$$w \cdot {\rm NF}({\rm spoly}(sC_4,\{C_1,C_2,C_3\})),$$
but by the Euler relation this expression reduces to zero, as we are considering the 
quasihomogeneous case.

For proving the equality part of the statement, it suffices to establish
$$\tau(X) = \sum_{p \in \operatorname{Sing}(Y)} \tau(Y,p) + 2.$$
Thus we need to show that $\operatorname{dim}_{\mathbb C} H^1(Y,T_Y)$ is at 
least $2$ for all quasihomogeneous ICMC2 surface singularities of type $(2,3,2)$ which 
have at most isolated singularities in their Tjurina transform. This 
certainly holds for the simplest such singularity given by the matrix 
$$\begin{pmatrix} x_1 & x_2 & x_3 \cr x_4 & x_1 & x_2
\end{pmatrix},$$
as an explicit Gr\"obner basis computation for $K$ provides the leading 
monomials $(0,t^2,0)$ and $(0,0,t^3)$ arising from the s-polynomials of the
pairs $(sC_4,C_1)$ and $(s^2C_4,C_1)$. As we know that all ICMC2 surface 
singularities of type $(2,3,2)$ are adjacent to this singularitiy 
(see \cite{FN}), the principle of conservation of number then ensures the
upper semicontinuity of $\operatorname{dim}_{\mathbb C} H^1(Y,T_Y)$ concluding
the proof.
\end{proof}

\begin{rem}
The Milnor number of a semiquasihomogeneous ICIS is known to coincide 
with the Milnor number of its quasihomogeneous initial part, the Tjurina number
of a semiquasihomogeneous ICIS is bounded from above by the Tjurina number
of its quasihomogeneous initial part. Therefore the preceding lemma also 
establishes the inequality
$$ \mu \geq \tau -1$$
for semiquasihomogeneous isolated determinantal surface singularities of 
type $(2,3,2)$ with at most isolated singularities in the Tjurina transform.
\end{rem}

\begin{rem}
The considerations in the proof also show that $\operatorname{dim}_{\mathbb C} H^1(Y,T_Y)$ can be
computed explicitly in the non-quasihomogeneous case:\\
Considering the module $K$ defined above, we first observe that already 
$N'=(({\mathbb C}[s,t]\{\underline{x}\})^3/K)_{(1)}$ has to be
a finite dimensional vector space due to finite determinacy of the given singularity. 
Then the desired dimension of $H^1(Y,T_Y)$ is precisely
$\operatorname{dim}_{\mathbb C}(N') - \operatorname{dim}_{\mathbb C}(T)$
where $T=({\mathbb C}[s]\{\underline{x}\})^3/\overline{K}$. 
Note that the value of $\operatorname{dim}_{\mathbb C} H^1(Y,T_Y)$ can exceed $2$, as the following 
example shows:
$$\begin{pmatrix}
x_1 & x_2 & x_3 \cr
x_2^2+2x_4^2-x_3^3 & -x_1^2+x_3^3 &-x_2^2+x_4^2+x_1^3 
\end{pmatrix}$$
In this example, $\tau(X,0)=34$ and $\sum_{p\in {\operatorname{Sing}}(Y)} \tau(Y,p) =31$ which 
yields a difference of $\operatorname{dim}_{\mathbb C} H^1(Y,T_Y)=3$. However, $\mu(Y)=39$ and 
hence this does not provide a counterexample to Wahl's conjecture. 
\end{rem}

\begin{rem}
Looking at the above proof more closely, we can even determine the basis elements contributing to $N'/T$. Considering a standard basis
of $K$ w.r.t. the ordering chosen in the proof, they are precisely the monomials in the basis of $N'$
which are divisible by monomials in the leading module $L(\overline{K})$ 
arising from reduction of elements of the form
$$s^jx_4^i\begin{pmatrix}ta_{x_4} \cr tb_{x_4} \cr tc_{x_4} \end{pmatrix}$$
where $j \in \{1,2\}$ as we know from the proof and $i \in {\mathbb N}$ takes all values which are 
sufficiently low not to push the whole element beyond the determinacy bound.
\end{rem}

We now construct a counterexample to Wahl's conjecture, i.e. a
non-quasi\-homo\-geneous determinantal surface singularity for which 
$\mu = \tau -1$ holds. More precisely, we try to salvage as much of the 
situtation of lemma \ref{Wahl_ok} as we can: We search for an isolated surface
singularity $(X,0)$,
which gives rise to more than one isolated singularity in the Tjurina transform
$Y$. If we can choose all of these singularities as quasihomogeneous, but 
w.r.t. different weights for the respective singularities, and if we can 
furthermore ensure that $\operatorname{dim}_{\mathbb C} H^1(Y,T_Y)=2$, this 
is precisely the desired counterexample. All of these constraints are e.g. 
satisfied for the singularity $(X,0)$ defined by the maximal minors of the 
matrix
$$\begin{pmatrix} z+x & y & x^k+w^2 \cr
                  w^l & z & y \end{pmatrix},$$
where the value of $k,l \in {\mathbb N}$ is at least $3$. 
Its Tjurina transform $Y$ has two quasihomogeneous singularities, an $A_{l-1}$ 
at $(\underline{0},(0:1))$ and a $D_{k+1}$ at $(\underline{0},(1:0))$:
$$A_{l-1}: \langle w^l+tx+tz,z+ty,y+tx^k+tw^2 \rangle \sim_C 
  \langle w^l+tx+t^3x^k+t^3w^2,y,z \rangle$$
with monomial basis of the Tjurina algebra 
$$(1, 0, 0),(w, 0, 0), \dots ,(w^{l-2}, 0, 0)$$
and
$$D_{k+1}: \langle sw^l+z+x,sz+y,sy+x^k+w^2 \rangle \sim_C
  \langle z,y,s^2x+x^k+w^2+s^3w^l \rangle$$
with monomial basis 
$$ (0, 0, 1),
   (0, 0, s),
   (0, 0, s^2),
   (0, 0, x),\dots ,
   (0, 0, x^{k-2}).$$
As both of these are quasihomogeneous, the Tjurina and Milnor number 
coincide for each of the two singularities and we have a total Milnor 
number of of $Y$ of $k+l$ which implies that
$\mu(X,0) = k+l+1$. On the other hand, it is an easy computation to see that
a ${\mathbb C}$-vector space basis of $T^1(X)$ is given by the monomials:
$$\begin{pmatrix} 0 & 0 & 1 \cr 0 & 0 & 0 \end{pmatrix}, \dots ,
  \begin{pmatrix} 0 & 0 & x^{k-1} \cr 0 & 0 & 0 \end{pmatrix},
  \begin{pmatrix} 0 & 0 & 0 \cr 1 & 0 & 0 \end{pmatrix}, \dots ,
  \begin{pmatrix} 0 & 0 & 0 \cr w^{l-1} & 0 & 0 \end{pmatrix},$$
$$  \begin{pmatrix} 0 & 0 & 0 \cr 0 & 0 & 1 \end{pmatrix},
  \begin{pmatrix} 0 & 0 & 0 \cr 0 & 0 & w \end{pmatrix}. $$
Hence, the Tjurina number is $k+l+2$ and we can even discern the three 
contributions in the monomial basis: the first $k-1$ basis elements and the 
last $2$ correspond directly to a basis of the $T^1$ of the $D_{k+1}$ 
singularity\footnote{More precisely, the last $2$ elements correspond to those
basis elements involving $s$.}, the $(k+1)$-st to the $(k+l-1)$-th element 
to a basis of the $T^1$ of 
the $A_{l-1}$ singularity leaving precisely two elements for 
$H^1(Y,T_Y)$. \\
To see that the determinantal singularity is not quasihomogeneous, we consider
two hyperplane sections: with $V(w)$ and with $V(x)$. Both lead to 
quasihomogeneous space curve singularities, but w.r.t. different weights:
$$\begin{pmatrix} z+x & y & x^k \cr 0 & z & x \end{pmatrix} 
  {\textrm{ of Type }}  A_k \vee L {\textrm{ and }}$$
$$\begin{pmatrix} z & y & w^2 \cr w^l & z & y \end{pmatrix}
  {\textrm{ of Type }} E_{2l+6}(2) {\textrm{ (for }} l \not\equiv 2 \operatorname{mod} 3 {\textrm{) or }} J_{\frac{l+4}{6},0}(\frac{l+4}{6})$$
which are quasihomogeneous w.r.t.~weights $(2,k+1,2,-)$ in the first case and
$(-,l+4,2l+2,3)$ in the second. By an easy calculation, we see that it is 
impossible to choose weights for $y$ and $z$ satisfying both conditions at the
same time for $k,l \geq 3$.\\

We now come back to $3$-folds. The results of \cite{FZ} and in this article also allow a more geometric interpretation for 
the new and surprising phenomena observed in \cite{DP} for the simple ICMC2 $3$-fold 
singularities from \cite{FN}. Following Damon and Pike, we now consider the difference between 
the Euler characteristic of the Milnor fibre $b_3-b_2$ and the Tjurina number $\tau_X$ using 
their invariant
$\gamma := \tau - (b_3 - b_2):$

\begin{observation}[\cite{DP}]
\begin{itemize}
\item[a)] $\gamma \geq 2$ for all simple ICMC2-singularities of dimension $3$ and increases in 
value as we move higher in the classification.
\item[b)]  $b_3 - b_2 \geq -1$, with equality for the generic linear section and one infinite 
family.
\item[c)]  $b_3 - b_2$ is constant for certain infinite families with
values $-1$ (one family), $0$ (two families), and $1$ (two families).
\item[d)]  $\gamma$ is constant in all other considered infinite families in the table of simple 
singularities with only one exception where both $b_3 - b_2$ and $\gamma$ increase with $\tau$.
\item[e)]  For singularities of the form
$\begin{pmatrix}
x & y & z \\
w & v & g(x, y)
\end{pmatrix}$
with $g$ a simple hypersurface singularity, $\gamma = 3$ and $b_3 - b_2 = \mu(g) - 1$.
\end{itemize}
\end{observation}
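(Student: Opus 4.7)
The plan is to interpret Damon and Pike's invariant $\gamma = \tau - (b_3 - b_2)$ through the two-part decomposition of the discriminant developed in Section~\ref{discr}, together with the isomorphism
$$T^1(X) \cong H^1(Y_0, T_{Y_0}) \oplus \bigoplus_{p \in \operatorname{Sing}(Y)} T^1(Y,p)$$
recalled from \cite{FZ}. This isomorphism yields the splitting $\tau(X) = \dim_{\mathbb C} H^1(Y_0, T_{Y_0}) + \sum_p \tau(Y,p)$, while the topology formulas of \cite{FZ} and \cite{Zac} express $b_3 - b_2$ as a sum $\sum_p \mu(Y,p)$ plus a fixed topological contribution $\delta(Y_0)$ coming from the exceptional fibre of the Tjurina modification. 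Subtracting yields
$$\gamma \;=\; \dim_{\mathbb C} H^1(Y_0, T_{Y_0}) \;+\; \sum_{p}\bigl(\tau(Y,p) - \mu(Y,p)\bigr) \;-\; \delta(Y_0),$$
and each of (a)--(e) should be read off from this identity by analysing the three summands separately.

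For (a) and (b) one adapts the semicontinuity argument at the end of the proof of Lemma~\ref{Wahl_ok} to the $3$-fold situation, obtaining $\dim_{\mathbb C} H^1(Y_0, T_{Y_0}) \geq 2$ by comparison with the simplest matrix that every simple ICMC2 $3$-fold is adjacent to; since the $\tau - \mu$ defects are nonnegative, this yields $\gamma \geq 2$. The equality $b_3 - b_2 = -1$ corresponds precisely to the cases where contribution (B) is empty: by the first lemma of this section this happens for generic linear entries once $N \geq 2t$, and as noted in the remark following the lemma about adjacency to $A_1$, for $3$-folds of type $(2,3,2)$ in $({\mathbb C}^5,0)$ these are exactly the singularities with $b_3 - b_2 = -1$, giving both the bound and the distinguished infinite family. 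For (c), the constant values $-1, 0, 1$ correspond to infinite families along which $\operatorname{Sing}(Y)$ stabilises as a set, the Milnor numbers $\mu(Y,p)$ do not change, and the topological contribution $\delta(Y_0)$ is fixed; the jumps between $-1, 0, 1$ reflect the number of $A_1$-points created in $Y$ by moving across the family.

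For (d), constancy of $\gamma$ is equivalent to the combined constancy of $\dim_{\mathbb C} H^1(Y_0, T_{Y_0})$ and of the $\tau - \mu$ defects of the $(Y,p)$; along each simple family of \cite{FN} a representative can be chosen so that the ICIS singularities of $Y$ remain quasihomogeneous, so both of these are constant, and the single exceptional family is identified as the one in which the quasihomogeneous structure of an ICIS fibre degenerates and forces $\dim_{\mathbb C} H^1(Y_0, T_{Y_0})$ to grow together with $\tau$. For (e), the matrix is of Hilbert--Burch shape with one non-linear entry $g(x,y)$, so that on one affine chart of $\mathbb{P}^1$ the Tjurina transform is smooth and on the other it reduces, after eliminating the linear entries, to the hypersurface germ $\{g = 0\}$ times a smooth factor; this immediately identifies the sole contributing ICIS as $g$ and gives $b_3 - b_2 = \mu(g) - 1$, while the remaining structural summands for Hilbert--Burch matrices of this shape combine to give the constant $\gamma = 3$.

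The main obstacle is the precise evaluation of $\delta(Y_0)$: in the surface case it is pinned down by $\mu(X) = 1 + \sum_p \mu(Y,p)$ from \cite{FZ}, but for $3$-folds the exceptional divisor of $\pi\colon Y \to X$ carries nontrivial middle-dimensional homology and its contribution must be computed via the Leray spectral sequence of $\pi$ for each Hilbert--Burch type appearing in the classification of \cite{FN}. A second, more technical difficulty is to upgrade the semicontinuity of $\dim_{\mathbb C} H^1(Y_0, T_{Y_0})$ from the surface argument of Lemma~\ref{Wahl_ok} to the $3$-fold setting, which requires the standard basis calculation to be repeated for the enlarged matrix shapes of the simple ICMC2 $3$-folds.
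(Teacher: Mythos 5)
Your overall strategy---splitting both $\tau_X$ and $b_3-b_2$ into a contribution from the singularities of the Tjurina transform and a residual ``structural'' part---is the same mechanism the paper uses, but two of your concrete steps fail. First, the bound $\operatorname{dim}_{\mathbb C} H^1(Y_0,T_{Y_0})\geq 2$ does \emph{not} transfer from the surface case to $3$-folds: for the generic linear section $A_0^+$ one has $\tau_X=1$ and a smooth Tjurina transform, so the structural summand is $1$, not $2$. The value $2$ is an artifact of the explicit standard basis computation in the proof of Lemma \ref{Wahl_ok} for surfaces in $({\mathbb C}^4,0)$; in the $3$-fold case what observation (a) actually requires is only $\tau_X-\tau_Y\geq 1$, which the paper obtains from $\tau_X-\tau_Y=1$ for $A_0^+$ together with the adjacency of every other simple ICMC2 $3$-fold to it. Second, and more importantly, your ``main obstacle'' $\delta(Y_0)$ is not an obstacle at all: the paper cites from \cite{FZ} that for these $3$-folds $b_2=1$ and $b_3$ equals the total Tjurina number $\tau_Y$ of the (quasihomogeneous, hence $\mu=\tau$) singularities of the Tjurina transform, so $b_3-b_2=\tau_Y-1$ and $\gamma=\tau_X-\tau_Y+1$ with no Leray spectral sequence needed. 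Since you leave $\delta(Y_0)$ undetermined, none of (a), (c), (d) is actually established by your argument---indeed $\gamma\geq 2$ cannot follow from a lower bound on the structural summand alone without knowing $\delta(Y_0)$.

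For (e) your identification $b_3-b_2=\mu(g)-1$ is correct, but the claim that ``the remaining structural summands combine to give $\gamma=3$'' is asserted rather than proved. The paper's argument here is specific: by \cite{FN}, $T^1_X$ is isomorphic to $T^1_{\rm section}$ of the plane curve defined by $g$, so $\tau_X-\tau_Y$ equals the difference between the Tjurina number of $g$ with section and without, which is $2$ for all the curves in question, whence $\gamma=(\tau_X-\tau_Y)+b_2=3$. You would need to supply this (or an equivalent) computation. Similarly, your explanation of (b)--(d) in terms of ``$A_1$-points created in $Y$'' conflates two different counts: the paper attributes the growth of $\tau_X$ at fixed $b_3$ (observations (b), (c)) to the maximal number of $A_0^+$-singularities (i.e.\ points of contribution (A), generic determinantal points) appearing in nearby fibres, not to $A_1$ points of the Tjurina transform, which govern contribution (B).
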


To explain these observations, which differ greatly from the rather rigid structure observed 
in the surface case, we use again the Tjurina modification. In all cases in question, the
Tjurina transform has at most quasihomogeneous hypersurface singularities, whence we know that its
Milnor and Tjurina numbers coincide. From \cite{FZ}, we know that $b_2=1$ and $b_3$ coincides
with the Tjurina number of the Tjurina transform. So observation a) simply states 
that $\tau_X-\tau_Y \geq 1$. In particular, we have $\tau_Y=0$ and $\tau_X=1$ for the generic
linear section, the $A_0^+$-singularity, implying that this singularity is not adjacent to an 
$A_1$ singularity. As all other singularities in the list are adjacent to it, this explains 
the lower bound for the difference and hence observation a).\\

The first part of observation b) follows immediately from the fact that $b_2=1$. The second
part is concerned with the family
$$\begin{pmatrix}
x & y & z \cr
w & x & y+v^k
\end{pmatrix}.$$
The Tjurina transform of this family is smooth, which implies that $b_3=0$. The Tjurina number of
$X$, however, increases in the family, $\tau_X=2k-1$, and is closely related to the maximal 
number of $A_0^+$-singularities which can appear in a fibre of the versal family. This maximal 
number is achieved e.g. by the perturbation
$$\begin{pmatrix}
x & y & z \cr
w & x & y+v^k+\alpha
\end{pmatrix}$$
for any $0 \neq \alpha \in {\mathbb C}$, where we see precisely $k$ such singularities. 
Observation c) then simply states that similar behaviour with constant topological type of the
Tjurina transform also occurs for other families which have singularities in their Tjurina 
transform.\\

Observation d), on the other hand, singles out families in which the increase of Tjurina number
originates from the increase in Milnor/Tjurina number of the Tjurina transform and the maximal
number of $A_0^+$-singularities appearing in a fibre of the versal family does not change. In
the last considered family, where $b_3-b_2$ and $\gamma$ increase with $\tau_X$, we see a first
example of increasing contributions to both the Tjurina transform and the purely determinantal
part.\\

The only part of the last observation, which still remains to be explained, is the statement 
$\gamma=3$. As already observed in \cite{FN}, $T^1_X$ is isomorphic to $T^1_{\rm section}$
of the plane curve singularity defined by the right hand lower entry. Hence $\tau_X - \tau_Y$
is the difference arising from deformations with section as opposed to usual deformations 
for the respective plane curve. In all cases in question, this difference is precisely $2$ giving
rise to $\gamma=(\tau_X-\tau_Y)+b_2=3$.

\begin{rem}
Although this article explains many of the recent surprising observations 
about ICMC2 singularities, this is merely a glimpse into the new phenomena 
we are seeing in determinantal singularities. Extending exisiting tools to
the determinantal setting and combining methods from the theory of syzygies,
from classical singularity theory and topology, we seem to have reached a 
point now, where we can start thinking about a more systematic study of 
general determinantal singularities. 
\end{rem}

\end{document}